\def\mathscr{\EuScript}
\newcommand{\paramu}{\nu}
\newcommand{\ti}{t_{\mathrm{i}}}
\newcommand{\tf}{t_{\mathrm{f}}}
\newcommand{\SoN}{\xi}    
\newcommand{\SSoN}{\Xi}   
\newcommand{\pif}{\pi_{\mathrm{f}}}
\newcommand{\tp}{\mathbf{T}_{\mathrm{p}}}
\newcommand{\td}{\mathbf{T}_{\mathrm{d}}}
\newcommand{\tpx}{t_{\mathrm{p}}}
\newcommand{\Tp}{\mathbf{T}_{\mathrm{p}}}
\newcommand{\tdx}{t_{\mathrm{d}}}
\newcommand{\Td}{\mathbf{T}_{\mathrm{d}}}
\newcommand{\tfx}{t_{\mathrm{f}}^{\SoN}}
\newcommand{\tpxk}{t_{\mathrm{p}}^{\SoN^{k}}}
\newcommand{\tdxk}{t_{\mathrm{d}}^{\SoN^{k}}}
\newcommand{\xx}{x^{\SoN}}
\newcommand{\stratv}{\mathbf{V}}
\newcommand{\fctI}{\mathrm{I}}
\newcommand{\Xtf}{\Psi^{(u,\va{V},\Tp,\Td)}_{\ti,\tf}(x_i)}
\newcommand{\nb}[3]{
  {\colorbox{#2}{\bfseries\sffamily\tiny\textcolor{white}{#1}}}
  {\textcolor{#2}{$\blacktriangleright${#3}$\blacktriangleleft$}}}
\newcommand{\jpc}[1]{\nb{jpc}{orange}{#1}}
\title{Low-thrust Interplanetary Trajectories with
  Missed Thrust Events: a Numerical Approach}
\author[1]{Pierre Carpentier}
\author[2]{Jean-Philippe Chancelier}
\author[2]{Guy Cohen}
\author[3]{Thierry Dargent}
\author[4]{Richard Epenoy}
\affil[1]{UMA, ENSTA Paris, IP Paris, France}
\affil[2]{CERMICS, \'Ecole nationale des ponts et chauss\'{e}es, IP Paris, France}
\affil[3]{Thales Alenia Space, France}
\affil[4]{Centre National d'\'Etudes Spatiales, France}
\date{\today}
\begin{document}

\maketitle

\begin{abstract}
The problem under consideration is to drive a spatial vehicle to a
target at a given final time while minimizing fuel consumption.
This is a classical optimal control problem in a deterministic
setting. However temporary stochastic failures of the engine may
prevent reaching the target after the engine usage is recovered.
Therefore, a stochastic optimal control problem is formulated under
the constraint of ensuring a minimal probability of hitting the target.
This problem is modeled, improved and finally solved by dualizing the
probability constraint and using an Arrow-Hurwicz stochastic algorithm.
Numerical results concerning an interplanetary mission are presented.
\end{abstract}

\section{Introduction and motivation\label{sect:introduction}}

The study described in this paper is the result of a scientific collaboration
conducted between Cermics, UMA and both the French government agency
CNES\footnote{\href{https://cnes.fr}{https://cnes.fr}} and Thales Alenia
Space\footnote{\href{https://www.thalesaleniaspace.com}{https://www.thalesaleniaspace.com}}.
The collaboration took place from December 2007 to December 2010.

\subsection{Problem statement\label{ssect:intro-problem}}

The primary objective of this work is to address the planning of space
rendez-vous missions, which can be formulated as optimal control problems with
terminal equality constraints. The cost function typically reflects goals such
as minimizing final time and/or fuel consumption.
However, the optimal control solutions derived from such formulations are
generally not ``robust''. That is, if a temporary engine failure occurs
--- causing a deviation from the ideal trajectory --- it may become very
difficult or even impossible to satisfy the terminal constraints.
To address this limitation, we propose an alternative approach in which the
final equality constraints are replaced by a \emph{constraint in probability},
based on a stochastic model of engine failure occurrence and duration.
The objective is to compute a reference trajectory that ensures a successful
rendez-vous with a specified probability, despite potential engine breakdowns.
Naturally, this reference trajectory entails a certain loss of performance,
as measured by the cost function, when compared to the ideal optimal solution.
The aim is therefore to make the trade-off between performance and safety more
explicit and quantifiable.

In more detail, we consider an optimal control problem involving a spacecraft
governed by differential equations. The control variable \(u\) represents the
action of the propulsion system, and the objective is to minimize fuel
consumption while reaching a target at a \emph{given} final time \(\tf\).
This type of problem can be addressed using Pontryagin’s Minimum Principle,
which yields an optimal control profile \(u(\cdot)\).
However, if a temporary engine failure occurs during the mission, the planned
control is forced to zero for the duration of the failure. Once the engine
recovers, the control must be recalculated based on the actual position at the
time of recovery in an attempt to still reach the target despite the
deviation. Yet, this may not always be feasible.
More specifically, given a probabilistic model for the breakdown time~\(\tp\)
and its duration~\(\td\), one can compute the probability of successfully
reaching the target after the engine recovers. Unfortunately, this probability
tends to be low if the spacecraft has strictly followed the nominal optimal
(deterministic) trajectory up to the point of failure.
As a result, to improve the chances of completing the rendez-vous successfully,
the trajectory must be planned from the outset to account for the possibility
of engine breakdowns.

\subsection{Literature review\label{ssect:intro-literature}}


In the field of spacecraft trajectory optimization, the Conway's
reference book~\cite{Conway_CUP_2010} presents a variety of both
analytical and numerical approaches to trajectory optimization
(see also the review papers~\cite{Shirazi_PAS_2018}
and~\cite{Chai_PAS_2019} for more recent references). This book
also contains some chapters devoted to low-thrust orbital transfer.
On this last subject, the review article~\cite{Morante_Aerospace_2021}
focuses on available numerical approaches for solving low-thrust
optimization problems, with a special emphasis of available tools
to solve such problems.

Among the literature on low-thrust trajectory design, some papers address more
specifically the issue of robustness. Indeed, guaranteeing performances such as
safety and cost under random disturbances is a major concern in a space mission
design. Such disturbances are either modeling errors (initial state, unmodeled
disturbances) or unexpected events (engine failures) or thrust realization
errors in magnitude and direction.  Different methods have been proposed to deal
with the difficulty induced by stochasticity.  A framework for primer
vector-based trajectory correction policy is proposed
in~\cite{Sidhoum_AAS_2024}.  The paper~\cite{Zavoli_JGCD_2021} investigates the
use of Reinforcement Learning for low-thrust trajectories design in presence of
noises and missed thrust events.  In~\cite{Oguri_AAS_2022}, the authors develop
a stochastic sequential convex programming approach and apply it to an
Earth-Mars transfer.  In~\cite{Greco_JGCD_2022}, the robust trajectory design
problem is formulated in the stochastic optimal control framework, more
precisely as a belief optimal control problem.  The
paper~\cite{Shimane_ASCEND_2021} deals with missed thrust events, that is, engine
failures during a planned thrusting phase, that may render the mission
unrecoverable. The authors develop a framework to assess the robustness of a
given nominal low-thrust trajectory.

Finally, three papers specifically deal with the problem of engine failure
of low-thrust propulsion spacecraft. In~\cite{Venigalla_JGCD_2022},
the authors introduce a virtual swarm method whose idea is to simultaneously
optimize the nominal trajectory together with recovery trajectories after
simulated failure. Using the swarm, they control the longest amount of time
a spacecraft may coast away from a nominal trajectory while still being
able to reach a terminal target once engine operations are resumed
(missed thrust recovery margin).
The paper~\cite{Rubinsztejn_AST_2021} investigates the use of expected
thrust fraction to design space trajectories capable of resisting missed
thrust events. The expected thrust fraction embeds the stochastic nature
of missed thrust events into a deterministic optimal control problem,
which allows for the use of computationally efficient optimal control
solvers in the design of resilient trajectories.
Finally, the title of the paper~\cite{Olympio_AAS_2010}
``Designing robust low-thrust interplanetary trajectories subject
to one temporary engine failure'' clearly indicates the interest
of the author for a problem very similar to the one we study here.
We share the same preoccupation as~\cite{Olympio_AAS_2010}:
similar modeling (stochastic optimization problem subject to probabilistic
constraints), and similar ways to tackle the problem using stochastic
approximation methods. However, we differ in the way we solve
the problem: the control structure (number of thrust and coast arcs)
is imposed in~\cite{Olympio_AAS_2010} and simplifying assumptions are
made when solving the optimal control subproblems, whereas we solve
the problem without simplification in this study. This said,
\cite{Olympio_AAS_2010} constitutes a significant contribution
to directly accounting for engine failure in an interplanetary transfer.

\subsection{Paper's content\label{ssect:intro-content}}

In~Sect.~\ref{sect:formulation} and Sect.~\ref{sect:improvement},
we present and improve the formulation
of the problem of driving the satellite in an optimal way subject
to meet the rendez-vous constraint with a given probability level.
Starting from the ``natural'' optimization problem formulated using
a probability constraint, we make use of several transformations
in order to obtain a formulation adapted to numerical treatments.
The deal is to finally obtain a stochastic optimization problem
such that both the cost function and the final target constraint
correspond to expectations.

In Sect.~\ref{sect:algorithm}, we recall the stochastic gradient method
and its application to an optimization problem subject to a constraint
in expectation. We point out the main difficulties associated with the
implementation of such an algorithm in the context of our rendez-vous
problem. Indeed a stochastic estimate of the probability constraint
gradient is not available here in a straightforward manner, and we have
to use approximation techniques in order to obtain such an estimate.
We detail the way convergence is affected by these approximations.

In Sect.~\ref{sect:realproblem}, we use the previous approximation
method to solve a real-life rendez-vous problem, and we present numerical
results illustrating how the optimal deterministic control used before any
failure is affected by the probability level associated with the target constraint.
We also illustrate the trade-off safety (probability level) versus economic
performance (fuel consumption).

Finally, we draw some conclusions in Sect.~\ref{sect:conclusion}
and we point out some obstacles, both from the theoretical
and the numerical point of view, we have to tackle for effectively
solving such problems.

\section{Problem formulation\label{sect:formulation}}

In this section, we present the satellite model and the associated
optimization problem, as well as the stochastic components that come
into play in this problem.

\subsection{Satellite model and deterministic problem\label{ssect:model}}

We consider a simplified dynamic model of a satellite, in which perturbation
forces are neglected. The equations describing the satellite evolution are
\begin{align}
  \frac{\mathrm{d}r}{\mathrm{d}t} = v
  \eqsepv \quad
  \frac{\mathrm{d}v}{\mathrm{d}t} = - \paramu \frac{r}{\norm{r}^{3}} +
  \frac{T}{m}\kappa
  \eqsepv
  \frac{\mathrm{d}m}{\mathrm{d}t} = - \frac{T}{g_{0}I_{\mathrm{sp}}} \delta
  \eqfinp
  \label{eq:dynamics}
\end{align}
The variables~$r\in\mathbb{R}^{3}$ and~$v\in\mathbb{R}^{3}$
are the inertial position and velocity of the satellite,
and~$m\in\mathbb{R}$ is its mass. The variables which allow
to control the satellite are~$\kappa\in\mathbb{R}^{3}$ (direction
cosines of the thrust) and~$\delta\in\{0,1\}$ (on-off switch of
the engine). We suppose that the control~$\delta$ is relaxed
in order to have a continuous control variable: $\delta\in[0,1]$.
These equations involve the constants~$\paramu$ (gravitational
constant), $g_{0}$ (standard acceleration), as well as the specific
impulse~$I_{\mathrm{sp}}$ and the thrust\footnote{assumed constant
rather than a function of the radius}~$T$ of the engine.

The coordinates~$(r,v)$ have several drawbacks (delicate
physical interpretation, impossible mathematical simplifications)
so that it is preferable to use the so-called equinoctial coordinates
 to model the satellite dynamics. The coordinate change is described
in Appendix~\ref{sec:satellite-details} and leads to a satellite state
variable~$x\in\mathbb{R}^{7}$
where~$(x_{1},\dots,x_{6})=(p,e_{x},e_{y},h_{x},h_{y},\ell)$ are
the equinoctial coordinates of the satellite, $x_{7}=m$ being its mass.
The satellite control variable~$u=(q,s,w)\in\mathbb{R}^{3}$ corresponds
to the radial, tangential and normal components of the control.

The deterministic control problem we aim to solve is to drive the satellite from
given initial conditions~$x_i\in \RR^7$ at a given time~$\ti$ to a known final
target, namely a position $(x_{\mathrm{f},1},\dots,x_{\mathrm{f},6})$, at a
given final time~$\tf$ while minimizing fuel consumption, that
is~$m(\ti)-m(\tf)= (x(\ti) -x(\tf))_7$. This optimal control problem is
therefore stated with a predetermined final time.  More precisely, the
deterministic optimization problem is formulated as
\begin{subequations}
  \label{pb:deterministic}
  \begin{align}
    \min_{u}
    &\quad K\bp{x(\tf)} \eqfinv \\
    &\text{s.t. } x(\ti) = x_{\mathrm{i}} \eqsepv
      \frac{\mathrm{d}x}{\mathrm{d}t}(t) = f\bp{x(t),u(t)} \eqfinv \label{eq:equinoxial}\\
    &\hphantom{\text{s.t. }} \norm{u(t)} \leq 1 \quad \forall t \in [\ti,\tf] \eqfinv \\
    &\hphantom{\text{s.t. }} C\bp{x(\tf)} = 0
      \eqfinv
  \end{align}
\end{subequations}
where mapping~$f$ is defined in Appendix~\ref{sec:satellite-details},
the mapping $K: x \in \RR^7 \mapsto (x_i -x)_7$ is the final cost
function giving the fuel consumption during the mission, and
the mapping $C: \RR^7 \to \RR^6$ describing the rendez-vous constraint
is defined by
\[
  C : x \in \RR^7 \mapsto (x_{1}-x_{\mathrm{f},1},\dots,x_{6} -x_{\mathrm{f},6}) \eqsepv
\]
where~$(x_{\mathrm{f},1},\dots,x_{\mathrm{f},6})$ are the equinoctial
coordinates corresponding to the final target~$(r_{\mathrm{f}},v_{\mathrm{f}})$.
Problem~\eqref{pb:deterministic} is called a mission, and we say that
the mission has failed if the target is not reached.

\subsection{Satellite model with engine failure during
            a time interval\label{ssect:satellite-failure}}

We want to take into account \emph{engine failures} during the satellite
mission. For that purpose we use a simplified failure model by assuming
that there is \emph{at most} one failure during a mission. Thus, a failure model
consists in an oriented pair of two times $(\tpx,\tdx)$, where $\tpx$ is
the time of the failure and $\tdx$ is the duration of the failure.

The dynamics of the satellite in the presence of failure is not different from
that given by~\eqref{eq:equinoxial}, and is characterized by the fact that the
control~$u$ is equal to 0 during the failure.  To ease the description of the
satellite dynamics, given two times $s\le s'$, we introduce a flow map
function~$\Phi_{s,s'}^{u} : \RR^{7} \to \RR^{7}$ defined by
\begin{align*}
  \Phi_{s,s'}^{u} : x \in \RR^{7}
    & \mapsto x^u(s') \\
  \text{ with }
    & x^u(s) = x \eqsepv
      \frac{\mathrm{d}x^u}{\mathrm{d}t}(t) = f\bp{x^u(t),u(t)} \eqsepv
      \forall t \in [s,s'] \eqfinp
\end{align*}

Now, assume that~$u$ is an  admissible control for Problem~\ref{pb:deterministic}.
The satellite controlled dynamics under failure has now at most three
different phases in the satellite dynamics.
\begin{enumerate}
\item During the time interval $[\ti,\tpx]$, the nominal control $u$,
  which is defined over $[\ti,\tf]$, is used. It is an open-loop control,
  that is, a control that depends only on time~$t$. The position
  of the satellite at the end of this first phase is
  \[
    x_{\tpx}= \Phi_{\ti,\tpx}^u(x_i)
    \eqfinp
  \]
\item During the time interval $[\tpx,\tpx+\tdx]$ the control is equal
  to~$0$ as the engine is not available.
  The position of the satellite at the end of the second phase is
  \[
    x_{\tpx+\tdx}= \Phi_{\tpx,\tpx+\tdx}^0(x_{\tpx})
    \eqfinp
  \]
\item
  During the time interval $[\tpx+\tdx,\tf]$, we recover engine
  control and we can try to compute a recourse control $v$
  to achieve the mission of reaching the target.
  The position of the satellite at the end of the third phase is
  \[
    x_{\tf}= \Phi_{\tpx+\tdx,\tf}^v(x_{\tpx+\tdx})
    \eqfinp
  \]
\end{enumerate}
Now, gathering the three phases of the satellite dynamics
the state of the satellite at final time
$\tf$ is given by
\begin{equation}
  \label{eq:flowuv}
  \Psi^{(u,v,\tpx,\tdx)}_{\ti,\tf}(x_i) =
  \begin{cases}
    \Phi_{\ti,\tf}^u(x_i)
    & \text{if } \tpx \ge \tf
      \eqfinv
    \\
    \Phi_{\tpx,\tf}^0 \bp{ \Phi_{\ti,\tpx}^u(x_i)}
    &\text{if } \tpx < \tf \text{ and } \tpx+\tdx \ge \tf
      \eqfinv
    \\
    \Phi_{\tpx+\tdx,\tf}^v\Bp{\Phi_{\tpx,\tpx+\tdx}^0 \bp{ \Phi_{\ti,\tpx}^u(x_i)}}
    &\text{if } \tpx < \tf \text{ and } \tpx+\tdx < \tf
      \eqfinp
  \end{cases}
\end{equation}

\subsection{Engine failures and stochastic problem\label{ssect:stochastic-problem}}

As explained in \S\ref{ssect:satellite-failure}, if an engine failure
occurs, the satellite drifts away from the deterministic optimal trajectory.
After the engine control is recovered, it is not always possible to drive
the satellite to the final target. By anticipating such possible failures
and by modifying the trajectory followed \emph{before} any such failure
occurs, one may increase the possibility of eventually reaching the target
after a failure. But such a deviation from the deterministic optimal
trajectory results in a decay of the economic performance.

We aim to formulate a new optimization problem where we need
to balance the increased probability of achieving the mission
(that is reaching the rendez-vous constraint) despite possible
failures against the expected economic performance, that is,
to quantify the price of safety one is ready to pay for.

There are different ways to accommodate these stochastic
features in an optimization problem. The first possibility
is to consider Robust Optimization (see~\cite{Bertsimas_SIAM_2011}
and the references therein for an overview of the subject).
In this approach, the uncertainty model is not stochastic,
but rather deterministic and set-based, in the sense that we seek
for a solution that is feasible for any realization of the uncertainty
in a given subset. Applied to our satellite problem, this approach aims
to consider a failure subset~$\SSoN_{\mathrm{r}}$ (for example
failures whose duration is less than or equal to a chosen value) and then
at minimizing the cost associated with the ``worst failure'' while
hitting the target for any failure in~$\SSoN_{\mathrm{r}}$:
\begin{align*}
  \min_{u} \min_{v}
  & \quad
    \Bp{\max_{(\tpx,\tdx)\in\SSoN_{\mathrm{r}}} K\Bp{  \Psi^{(u,v,\tpx,\tdx)}_{\ti,\tf}(x_i)}} \eqfinv \\
  & \text{s.t. }
    C\Bp{  \Psi^{(u,v,\tpx,\tdx)}_{\ti,\tf}(x_i)} = 0\eqsepv  \forall (\tpx,\tdx) \in\SSoN_{\mathrm{r}} \eqfinp
\end{align*}

We do not follow this direction and we rather appeal to Stochastic
Optimization (see the reference books \cite{Ruszczynski_Elsevier_2003}
and~\cite{Shapiro_SIAM_2014} for an overview). We consider a probability
space~$(\Om,\trib,\prbt)$ and we assume given two random variables $\Tp$
which is the time of the failure and $\Td$ which is the duration
of the failure. It is assumed that~$\Tp$ and~$\Td$ do not depend
on the control~$u$, which is possibly false in practice since engine
failure may depend on how one drives the engine. Note that now, the
recourse control $v$ is a random variable as it is a recourse control
which is computed knowing the two random variables~$\Tp$ and~$\Td$.
As we adopt the convention to write random variables using capital
bold letters, the recourse control $v$ is renamed as $\stratv$.
A natural criterion to be minimized in the stochastic formulation
is the expectation of the final cost, so that the optimization
problem sounds like:
\begin{equation*}
  \min_{u} \min_{\stratv}
  \Besp{K\Bp{  \Psi^{(u,\va{V},\Tp,\Td)}_{\ti,\tf}(x_i)}}
  \eqfinp
\end{equation*}

It remains to define how to take the rendez-vous constraint into account.
\begin{itemize}
\item The first possibility is to formulate the constraint
  ``almost surely'', which means that the target has to be reached
  for almost all failures according to the underlying probability
  law:
  \begin{equation*}
    C\Bp{  \Psi^{(u,\va{V},\Tp,\Td)}_{\ti,\tf}(x_i)}  =0 \eqsepv \text{\Pps[]} \eqfinp
  \end{equation*}
  This formulation is however not relevant here because we know that
  there exist realizations of the failure such that the target cannot
  be reached whatever the deterministic control~$u$ used. The admissible
  set of the optimization problem is thus empty.
\item The second possibility is to formulate the constraint
  in expectation:
  \begin{equation*}
    \Besp{C\Bp{  \Psi^{(u,\va{V},\Tp,\Td)}_{\ti,\tf}(x_i)}} =0 \eqfinp
  \end{equation*}
  Such a formulation, although mathematically attractive, is not well-suited in
  this context because meeting the rendez-vous constraint in expectation still
  leaves the possibility that the target is not reached in (almost) any failure
  scenario, and, at the very least, this does not tell how many of those
  scenarios will hit the target. This leads us to the next formulation.
\item The third possibility is to use a constraint in probability:
  in this setting, the target has not to be hit
  ``almost always'' as in the first possibility, but with a certain
  probability~$p$ whose value is given:
  \begin{equation*}
    \Bprob{C\Bp{  \Psi^{(u,\va{V},\Tp,\Td)}_{\ti,\tf}(x_i)} = 0} \geq p \eqfinp
  \end{equation*}
\end{itemize}
In the sequel, we concentrate on this last formulation in probability, which is
particularly well suited for the optimization problem under consideration in
this paper. As a matter of fact, we are here interested in a ``death or life''
process corresponding to the binary condition to reach the target or
not,\footnote{Otherwise stated, the amount the target is missed does not matter
  if the target is not reached.} and the probability constraint exactly does the
work because it counts the scenarios in which the target is reached. Note that,
while the robust approach focusses on the worst scenario in an a priori defined
subset, this formulation amounts to considering the scenarios which are on the
one hand feasible and on the other hand not too penalizing in terms of
performance as measured by the cost function.

\subsection{Initial stochastic formulation\label{ssec:form-natur}}

To summarize what was said in~\S\ref{ssect:stochastic-problem},
the first ``natural'' formulation of the optimal control problem
under consideration consists in minimizing an expectation cost
subject to a rendez-vous probability constraint:
\begin{subequations}
  \label{pb:stochnat}
  \begin{align}
    \min_{u} \min_{\stratv}
    & \quad\Besp{K\Bp{ \Psi^{(u,\va{V},\Tp,\Td)}_{\ti,\tf}(x_i)}}
      \label{eq:stochnat-crit} \\
    & \text{s.t. }
      \norm{u(t)} \leq 1 \;\;\; \forall t \in [\ti,\tf]
      \eqfinv \label{eq:stochnat-commu} \\
    & \hphantom{\text{s.t. }} \|\va{V}(t)\| \leq 1 \;\;\; \forall t \in [\Tp+\Td,\tf]
      \eqfinv \label{eq:stochnat-commv}\\
    &\hphantom{\text{s.t. }} \Bprob{C\Bp{ \Psi^{(u,\va{V},\Tp,\Td)}_{\ti,\tf}(x_i)} = 0} \geq p
      \eqfinp \label{eq:stochnat-cible}
  \end{align}
\end{subequations}

Our main goal is to numerically solve this optimization problem.
But Formulation~\eqref{pb:stochnat} is not adapted to that purpose.
First, the cost function takes all possible failures into account
(by mean of the expectation operator), whereas we  are in fact not
interested in failures such that the satellite misses the target.
Second, the probability constraint cannot be treated as is, and
we need to find a more practical expression in order to numerically
deal with it. Indeed, probability constraints raise important
mathematical difficulties, such as the lack of convexity or connectedness
of the feasible subset they induce. Those convexity or connectedness (or emptiness) properties depend of course on the properties of the function
entering the constraint, but also on the probability distribution
of the random variable and on the level~$p$ of probability required.
One may refer to~\cite{Prekopa_Kluwer_1995} and~\cite{Prekopa_Elsevier_2003}
for a general presentation of probability constraints,
to~\cite{Henrion_JOTA_2002} for connectedness properties,
to~\cite{Henrion_COA_2008} for convexity properties and
to~\cite{Henrion_MP_2004} for the question of stability
(see also~\cite[Chap.4]{Shapiro_SIAM_2014} for a review
on optimization under probabilistic constraints).
In this paper, we are mainly interested in the numerical difficulties
arising when dealing with probability constraints. Thus we do not focus
on theoretical questions and we assume that all the problems under
consideration have the required properties when using numerical algorithms:
connectedness and convexity of the admissible set, continuity and differentiability of the constraints and existence of saddle points
for the Lagrangian associated to the problem.

In the next section, we modify and improve the formulation
in order to obtain a numerically tractable optimization problem.

\section{Formulation improvement\label{sect:improvement}}

We now describe the improvements to be made to Problem~\eqref{pb:stochnat}
to obtain an operational formulation.

\subsection{Expectation formulation\label{ssec:improv-expectation}}

\subsubsection{Missing the target should not contribute to the cost function}

When a failure occurs, it may be impossible for some cases to reach the
target. For the other cases, depending on the required level~$p$ in the
probability constraint~\eqref{eq:stochnat-cible}, one may decide not to reach
the target even if possible. For all the failure scenarios such that the target
is missed, the optimal control consists in not restarting the engine at the end
of the failure because, as long as these scenarios contribute to the expected
cost~\eqref{eq:stochnat-crit}, this is the way to reduce the fuel consumption.
But we are in fact not interested in what happens whenever the mission fails, so
that these scenarios yield an artificially good cost. Otherwise stated, a
failure scenario, namely when the satellite misses the target and thus does not
contribute to the probability constraint, should not be taken into account in
the cost function. Thus we have to replace the original expected cost
in~\eqref{eq:stochnat-crit} by the expected cost \emph{conditionally} to
reaching the target, so that the problem becomes
\begin{subequations}
  \label{pb:stochnat-cond}
  \begin{align}
    \min_{u} \min_{\stratv}
    & \quad \Bespc{K\Bp{\Xtf }}{C\bp{\Xtf}=0}
      \label{eq:stochnat-crit-cond} \\
    & \text{s.t. } \eqref{eq:stochnat-commu} \eqsepv
                   \eqref{eq:stochnat-commv}\eqsepv \\
    &\hphantom{\text{s.t. }}
     \Bprob{C\Bp{ \Psi^{(u,\va{V},\Tp,\Td)}_{\ti,\tf}(x_i)} = 0} \geq p
     \eqfinp \label{eq:stochnat-cible-cond}
  \end{align}
\end{subequations}

\subsubsection{Probability as an expectation}

Using standard properties of conditional expectation we
rewrite Problem~\ref{pb:stochnat-cond} as follows.

\begin{subequations}
  \label{pb:stochcond}
  \begin{align}
    \min_{u} \min_{\stratv}
    &\quad
      \frac{\Besp{K\bp{\Xtf} \times
      \fctI\bp{\big\|C\bp{\Xtf}\big\|}}}
      {\Besp{\fctI\bp{\big\|C\bp{\Xtf}\big\|}}}
      \eqfinv \label{eq:stochcond-crit} \\
    & \text{s.t. }
      \eqref{eq:stochnat-commu}\eqsepv \eqref{eq:stochnat-commv} \eqsepv \\
    & \hphantom{\text{s.t. }}
      \Besp{\fctI\bp{\big\|C\bp{\Xtf}\big\|}} \geq p
      \eqfinv\label{eq:stochcond-cible}
  \end{align}
\end{subequations}
where the real-valued indicator function~$\fctI$ is defined,
for non negative values of~$y$, by
\begin{equation}
  \label{eq:indicator-function}
  \fctI(y)=
  \begin{cases}
    1 & \text{if\ } y=0, \\
    0 & \text{otherwise}
        \eqfinp
  \end{cases}
\end{equation}

\begin{remark}
  The indicator function we introduced both in the cost function
  and in the probability constraint is highly discontinuous.
  Note however that the function~$\fctI$ is up to now inside
  an expectation so that we may recover smooth expressions once
  expectation is achieved.
\end{remark}

\subsubsection{Expectation based reformulation}

In Problem~\eqref{pb:stochcond}, the denominator
in the ratio defining the cost function is identical to the left-hand side
of the constraint. This specificity allows us to use Lemma~\ref{lem:useful}
in Appendix~\ref{sec:form-lemma}, so that it is possible to replace
the ratio defining the criterion~\eqref{eq:stochcond-crit} by its numerator,
provided that we a posteriori check a condition on the optimal multiplier
associated with Constraint~\eqref{eq:stochcond-cible}.
Instead of~\eqref{pb:stochcond}, we aim at solving the following problem
for which both cost and constraint functions are expressed as expectations:
\begin{subequations}
  \label{pb:stochexpect}
  \begin{align}
    \min_{u} \min_{\stratv}
    &\quad
      \Besp{K\bp{\Xtf} \times \fctI\bp{\big\|C\bp{\Xtf}\big\|}}
      \label{eq:stochexpect-crit} \\
    & \text{s.t. }
      \eqref{eq:stochnat-commu}\eqsepv \eqref{eq:stochnat-commv}\eqsepv\\
    & \hphantom{\text{s.t. }}
      \Besp{\fctI\bp{\big\|C\bp{\Xtf}\big\|}} \geq p \eqfinp
      \label{eq:stochexpect-cible}
  \end{align}
\end{subequations}
With that reformulation, we have obtained a stochastic optimization
problem formulation better suited to numerical resolution than
the previous formulation~\eqref{pb:stochcond}, in the sense where
both the criterion and the constraint correspond to expectations.

\subsubsection{Introducing the no-failure scenario\label{ssec:form-nofailure}}

The event $\ba{ \Tp \geq\tf} \subset \Om$
is called the ``no-failure scenario'', because it corresponds
to the set of failure scenarios that occur after the end of the mission.
We denote by~$\pif$ the probability of that event, that is:
\begin{equation*}
  \pif = \bprob{\ba{ \Tp \geq\tf}} \eqfinp
\end{equation*}
For every~$\omega \in\ba{ \Tp \geq\tf}$, there is no need for a closed-loop
control~$\va{V}$. Indeed, on the event $\ba{ \Tp \geq\tf}$, the state
of the satellite at final time $\tf$ only depends on the nominal control~$u$.

For classical missions, we do hope that the probability~$\pif$
of the no-failure scenario is rather large (close to~$1$). Moreover,
we expect the stochastic Problem~\eqref{pb:stochexpect} to have
admissible solutions for some probability levels~$p$ higher than
the probability $\pif$ of the ``no-failure scenario''.
Therefore, we make the following assumption on $p$ and $\pif$.

\begin{assumption}
  \label{as:probalevel}
  The final time~$\tf$ and the probability level~$p$
  of Problem~\eqref{pb:stochexpect} are such that
  \begin{equation*}
    p  \geq \pif > \frac{1}{2} \eqfinp
    \label{eq:assumption-pi}
  \end{equation*}
\end{assumption}

Under Assumption~\ref{as:probalevel}, we give in Lemma~\ref{lem:u-target}
an equivalent formulation of Problem~\eqref{pb:stochexpect}.
Moreover, one deduces from Lemma~\ref{lem:u-target}
that the optimal open-loop control used
prior to any failure must be such that the associated state
trajectory meets the rendez-vous constraint in the scenario
without failure.
\begin{lemma}
  \label{lem:u-target}
  Under Assumption~\ref{as:probalevel}, Problem~\eqref{pb:stochexpect}
  is equivalent to the following Problem~\eqref{pb:stochexpect-nofail}
  \begin{subequations}
    \label{pb:stochexpect-nofail}
    \begin{align}
      \min_{u} \min_{\stratv}
      &\quad
        \pif \, K\bp{\Phi^u_{\ti,\tf}(x_i)} +
        \bp{1-\pif}
        \Bespc{K_I\bp{\Xtf}}{\Tp <\tf}\eqsepv
        \label{eq:stochexpect-crit-nofail}
      \\
      & \text{s.t. }
        \eqref{eq:stochnat-commu}\eqsepv \eqref{eq:stochnat-commv}\eqsepv \\
      & \hphantom{\text{s.t. }}\pif \, C\bp{\Phi^u_{\ti,\tf}(x_i)} = 0 \eqsepv
        \label{eq:stochexpect-nofail-u}
      \\
      & \hphantom{\text{s.t. }}
        \bp{1-\pif} \Bespc{\fctI\bp{\big\|C\bp{\Xtf}\big\|}}{\Tp <\tf} \geq p-\pif
        \label{eq:stochexpect-nofail-V}
        \eqfinv
    \end{align}
    where the function $K_I$ is defined by $K_I(\cdot):= K(\cdot) \times \fctI\bp{\big\|C\bp{\cdot}\big\|}$.
  \end{subequations}
\end{lemma}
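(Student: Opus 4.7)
The plan is to establish the equivalence of Problems~\eqref{pb:stochexpect} and~\eqref{pb:stochexpect-nofail} by showing that their feasible sets coincide and that their cost functionals agree on this common set. The only analytical tool needed is the law of total expectation applied to the partition $\Om = \ba{\Tp \geq \tf} \sqcup \ba{\Tp < \tf}$, combined with the key observation that on the no-failure event $\ba{\Tp \geq \tf}$ the final state reduces to the deterministic trajectory endpoint $\Phi^u_{\ti,\tf}(x_i)$, as prescribed by the first branch of~\eqref{eq:flowuv}.

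First, I would conditionally decompose both the cost and the constraint of~\eqref{pb:stochexpect}. The probability constraint~\eqref{eq:stochexpect-cible} becomes
\begin{equation*}
  \pif \, \fctI\Bp{\big\|C\bp{\Phi^u_{\ti,\tf}(x_i)}\big\|} + \bp{1-\pif}
  \Bespc{\fctI\bp{\big\|C\bp{\Xtf}\big\|}}{\Tp < \tf} \geq p
  \eqfinv
\end{equation*}
and the cost~\eqref{eq:stochexpect-crit} decomposes analogously with an extra factor $K$ multiplying~$\fctI$ inside each term.

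The central (and essentially only non-trivial) step is to show that any admissible pair $(u,\stratv)$ for~\eqref{pb:stochexpect} must satisfy $C\bp{\Phi^u_{\ti,\tf}(x_i)}=0$. Since $\fctI$ takes values in $\ba{0,1}$, the first summand above is either $\pif$ or $0$; if it were $0$, the left-hand side would be bounded by $1-\pif$. But Assumption~\ref{as:probalevel} gives $p \geq \pif > 1/2$, whence $1-\pif < 1/2 < p$, contradicting the required lower bound. Hence $\fctI\bp{\|C(\Phi^u_{\ti,\tf}(x_i))\|}=1$, which is precisely~\eqref{eq:stochexpect-nofail-u}. Substituting this back into the decomposed constraint yields~\eqref{eq:stochexpect-nofail-V}, and substituting into the decomposed cost yields~\eqref{eq:stochexpect-crit-nofail}, with the first summand collapsing to $\pif\,K\bp{\Phi^u_{\ti,\tf}(x_i)}$ and the second matching the definition of $K_I$.

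For the converse direction, any $(u,\stratv)$ feasible for~\eqref{pb:stochexpect-nofail} satisfies $\fctI\bp{\|C(\Phi^u_{\ti,\tf}(x_i))\|}=1$ by~\eqref{eq:stochexpect-nofail-u}, so adding $\pif$ to both sides of~\eqref{eq:stochexpect-nofail-V} recovers~\eqref{eq:stochexpect-cible}, and the cost expression reassembles as~\eqref{eq:stochexpect-crit}. The main obstacle is pinpointing the forcing of~\eqref{eq:stochexpect-nofail-u} from Assumption~\ref{as:probalevel}; everything else is routine rearrangement of conditional expectations.
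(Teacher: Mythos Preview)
Your proposal is correct and follows essentially the same approach as the paper: both decompose the expectation over the partition $\ba{\Tp \geq \tf} \sqcup \ba{\Tp < \tf}$, then use Assumption~\ref{as:probalevel} via the same contradiction (if the no-failure indicator vanished, the constraint would force $1-\pif \geq p \geq \pif$, violating $\pif > 1/2$) to conclude~\eqref{eq:stochexpect-nofail-u}, after which the remaining identifications are immediate. The only cosmetic difference is that the paper states the easy implication $\eqref{eq:stochexpect-nofail-u}\wedge\eqref{eq:stochexpect-nofail-V}\Rightarrow\eqref{eq:stochexpect-cible}$ first, whereas you lead with the contrapositive direction.
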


\begin{proof}
  As a preliminary result, for any measurable mapping $\varphi$,
  we have that\footnote{with, for a set~$A$, the notation
  $\mathds{1}_{A}(a) = 1$ if~$a\in A$ and~$\mathds{1}_{A}(a) = 0$
  otherwise}
  \begin{align}
    &\Besp{\varphi\Bp{\Xtf}} \nonumber \\
    &\hspace{1cm} = \Besp{\varphi\Bp{\Xtf}\mathds{1}_{\na{\Tp \geq\tf}} +\varphi\Bp{\Xtf}\mathds{1}_{\na{\Tp < \tf}}}
      \nonumber \\
    &\hspace{1cm} = \Besp{\varphi\Bp{\Phi^u_{\ti,\tf}(x_i)}\mathds{1}_{\na{\Tp \geq\tf}} +\varphi\Bp{\Xtf}\mathds{1}_{\na{\Tp < \tf}}}
      \nonumber \\
    &\hspace{1cm} = \pif \varphi\Bp{\Phi^u_{\ti,\tf}(x_i)} + (1-\pif)\Bespc{\varphi\Bp{\Xtf}}{{\na{\Tp < \tf}}}
      \label{eq:esp-Tp}
      \eqfinp
  \end{align}
  Now, applying Equality~\eqref{eq:esp-Tp} with $\varphi(\cdot)= \fctI\bp{\big\|C\bp{\cdot}\big\|}$, we obtain that
  Constraint~\eqref{eq:stochexpect-cible} can be equivalently rewritten as
  \begin{equation}
    \label{eq:stochexpect-cibis}
    \pif \, \fctI\bp{\big\|C\bp{\Phi^u_{\ti,\tf}(x_i)}\big\|} + \bp{1-\pif}
    \Bespc{\fctI\bp{\big\|C\bp{\Xtf}\big\|}}{\Tp <\tf} \geq p \eqfinp
  \end{equation}
  Then, we prove that~\eqref{eq:stochexpect-cibis} is equivalent
  to~\eqref{eq:stochexpect-nofail-u} and~\eqref{eq:stochexpect-nofail-V}.
  \begin{itemize}
  \item First, it is immediate that the implication
  $\eqref{eq:stochexpect-nofail-u} \wedge \eqref{eq:stochexpect-nofail-V}
   \implies \eqref{eq:stochexpect-cibis}$ holds true. Indeed,
  assuming~\eqref{eq:stochexpect-nofail-u}, we
  have that $\fctI\bp{\big\|C\bp{\Phi^u_{\ti,\tf}(x_i)}\big\|}=1$ and then
  Inequality~\eqref{eq:stochexpect-nofail-V} gives~\eqref{eq:stochexpect-cibis}.
  \item Second, we prove the reverse implication
  $\eqref{eq:stochexpect-cibis} \implies \eqref{eq:stochexpect-nofail-u}
   \wedge \eqref{eq:stochexpect-nofail-V}$ by contradiction. Assume
  that~\eqref{eq:stochexpect-cibis} is satisfied and
  that~$C\bp{\Phi^u_{\ti,\tf}(x_i)} \neq 0$, that is,
  $\fctI\bp{\|C(\Phi^u_{\ti,\tf}(x_i))\|}=0$. Then we obtain
  \begin{equation}
    \bp{1-\pif} \Bespc{\fctI\bp{\big\|C\bp{\Xtf}\big\|}}{\Tp<\tf} \geq p \eqfinp
    \label{eq:ineq-p}
  \end{equation}
  As the left-hand side of Equation~\eqref{eq:ineq-p} is smaller
  that~$1-\pif$ and the right-hand side is greater that~$\pif$
  using Assumption~\ref{eq:assumption-pi}, we obtain that
  $1-\pif \ge \pif$ which contradicts Assumption~\ref{eq:assumption-pi}.
  We have obtained that~\eqref{eq:stochexpect-cibis}
  implies~\eqref{eq:stochexpect-nofail-u} and thus immediately~\eqref{eq:stochexpect-nofail-V}.
  \end{itemize}
  Finally, we consider the cost in~\eqref{eq:stochexpect-crit}.
  Applying Equality~\eqref{eq:esp-Tp} with $\varphi  = K_I$,
  the cost function in~\eqref{eq:stochexpect-crit} can be rewritten as
  \begin{subequations}
    \begin{align}
      &\Besp{K\bp{\Xtf} \times \fctI\bp{\big\|C\bp{\Xtf}\big\|}} \nonumber
      \\
      &\hspace{1cm} =
        \pif \, K_I\bp{\Phi^u_{\ti,\tf}(x_i)} +
        \bp{1-\pif}
        \Bespc{K_I\bp{\Xtf}}{\Tp <\tf}\eqsepv
        \tag{by~\eqref{eq:esp-Tp}}
      \\
      &\hspace{1cm} =
        \pif \, K\bp{\Phi^u_{\ti,\tf}(x_i)} +
        \bp{1-\pif}
        \Bespc{K_I\bp{\Xtf}}{\Tp <\tf}
        \tag{using~\eqref{eq:stochexpect-nofail-u}}
        \eqsepv
    \end{align}
  \end{subequations}
  which gives~\eqref{eq:stochexpect-crit-nofail}. The proof is complete.
\end{proof}

Introducing the no-failure scenario in the formulation of the stochastic
optimization problem allows us to use the conditional probability
law~$\bprobc{\cdot}{\Tp <\tf}$ instead of the initial probability
law~$\bprob{\cdot}$.
As we aim at implementing a stochastic gradient algorithm for solving
the problem (see forthcoming Sect.~\ref{sect:algorithm}), it would not
be wise to draw failure scenarios according to the initial law~$\prbt$,
since the single no-failure scenario alone carries a large probability
mass~$\pif$. By using the conditional law~$\bprobc{\cdot}{\Tp<\tf}$,
a failure occurring during the mission is drawn at each iteration
of the stochastic gradient algorithm.

\subsection{Final formulation obtained by smoothing indicator function~$\fctI$}

A major challenge in the formulation~\eqref{pb:stochexpect-nofail} is how to
deal with the indicator function~$\fctI$ that isn't even continuous.  Using the
notion of mollifier introduced in~\cite{Ermoliev_SIAM_1995}, we replace in
Problem~\eqref{pb:stochexpect-nofail} the function~$\fctI$ by a smoother (in
fact continuous) function $\fctI_{r}$ defined for non negative values of~$y$
and depending on a parameter~$r>0$, namely.
\begin{equation}
  \label{eq:smooth-indicator-function}
  \fctI_{r}(y)= \max\Ba{0,1-\frac{y}{r}} \eqfinv
\end{equation}
where the approximation~$\fctI_{r}$ of~$\fctI$ improves as~$r$
decreases. Finally, we consider the following smoothed
Problem~\eqref{pb:finalsmooth} derived from
Problem~\eqref{pb:stochexpect-nofail}:
\begin{subequations}
  \label{pb:finalsmooth}
  \begin{align}
    \min_{u \in \mathcal{U}} \min_{\stratv \in \mathcal{V}}
    &\quad
      \pif \, K\bp{\Phi^u_{\ti,\tf}(x_i)} +
      \bp{1-\pif}
      \Bespc{K_{\fctI_r}\bp{\Xtf}}{\Tp <\tf}\eqsepv
      \label{eq:finalsmooth-crit}
    \\
    & \text{s.t. }\pif \, C\bp{\Phi^u_{\ti,\tf}(x_i)} = 0 \eqsepv
      \label{eq:stochexpect-nofail-u-r}
    \\
    & \hphantom{\text{s.t. }} p - \pif \; - \; \bp{1-\pif} \:
      \Bespc{\fctI_{r}\bp{\big\|C\bp{\Xtf}\big\|}}{\Tp<\tf}
      \leq 0 \eqfinp
      \label{eq:finalsmooth-cibl2}
    \\[0.5cm]
    &\text{ with  }{\mathcal{U}} =\nset{u}{ \forall t \in [\ti,\tf]\eqsepv \norm{u(t)} \leq 1}
      \eqfinv
    \nonumber \\
    &\hphantom{\text{ with  }}{\mathcal{V}}
      =\nset{\va{V}}{\forall t \in [\Tp+\Td,\tf]\eqsepv \|\va{V}(t)\| \leq 1}
      \nonumber
  \eqfinp
  \end{align}
\end{subequations}

\section{Stochastic gradient algorithm\label{sect:algorithm}}

Formulation~\eqref{pb:finalsmooth} consists in minimizing the expected
cost~\eqref{eq:finalsmooth-crit} subject to the constraint in
expectation~\eqref{eq:finalsmooth-cibl2}.  It is thus well suited for applying a
stochastic gradient algorithm, namely the Arrow-Hurwicz algorithm, for finding a
saddle point of the Lagrangian of Problem~\eqref{pb:finalsmooth}:
see~\cite{Culioli_SIAM_1990} for stochastic gradient methods with deterministic
constraints, \cite{Culioli_CRAS_1995} and~\cite{Culioli_RI_1994} when dealing
with constraints in expectation in the sub-differentiable case (see
also~\cite[Chap~10]{Carpentier_Springer_2017} for a more recent reference, in
French). In this section, we develop the practical implementation on the
Arrow-Hurwicz algorithm. As already mentioned, we do not focus on theoretical
questions and we assume that using such an algorithm can be justified in the
present setting. The main assumption is in fact the existence of a saddle point
for Problem~\eqref{pb:finalsmooth}, a difficult issue in the framework of
probabilistic constraints.

\subsection{Lagrangian formulation\label{ssect:algo-lagrang}}

Denoting by~$\mu \in \RR^{+}$ the multiplier associated with
Constraint~\eqref{eq:finalsmooth-cibl2}, the dual optimization
problem we have to solve is
\begin{subequations}
  \label{pb:finalsmooth-dual-orig}
  \begin{align}
    \max_{\mu \geq 0} \min_{u \in \mathcal{U}\,, \stratv \in \mathcal{V}}
    &\quad
      \pif \, K\bp{\Phi^u_{\ti,\tf}(x_i)} +
      \bp{1-\pif}
      \Bespc{{K_{\fctI_r}^{\mu}\bp{\Xtf}}}{\Tp <\tf}
      \eqsepv
      \label{eq:finalsmooth-dual-orig-crit}
    \\
    & \text{s.t. }\pif \, C\bp{\Phi^u_{\ti,\tf}(x_i)} = 0
      \label{eq:stochexpect-dual-orig-nofail-u-r}
      \eqfinv
  \end{align}
\end{subequations}
with the notation $K_{\fctI_r}^{\mu}(\cdot)=(K(\cdot)-\mu)\times\fctI_r(\cdot)$.
The minimization with respect to the closed-loop strategy $\va{V}$
and the conditional expectation can be interchanged
(see \cite[Theorem 14.60]{Rockafellar_Springer_1998} for mathematical
details): loosely speaking, there are as many closed-loop controls
as scenarios, so that the minimization with respect to the closed-loop
strategy is in fact achieved scenario by scenario. Therefore Problem~\eqref{pb:finalsmooth-dual-orig} is equivalent to the
following dual problem:
\begin{subequations}
  \label{pb:finalsmooth-dual}
  \begin{align}
    \max_{\mu \geq 0} \min_{u \in \mathcal{U}}
    &\quad
      \pif \, K\bp{\Phi^u_{\ti,\tf}(x_i)} +
      \bp{1-\pif}
      \Bespc{ \min_{\stratv \in \mathcal{V}} {K_{\fctI_r}^{\mu}\bp{\Xtf}}}{\Tp <\tf}
      \eqsepv
      \label{eq:finalsmooth-dual-crit-ter}
    \\
    & \text{s.t. }\pif \, C\bp{\Phi^u_{\ti,\tf}(x_i)} = 0
      \label{eq:stochexpect-dual-nofail-u-r-ter}
      \eqfinp
  \end{align}
\end{subequations}
This dual problem incorporates an optimization problem in~$\va{V}$, whose
optimal value depends on the no-failure scenario only by means of the state
value ~$x^u_{\Tp}= \Phi^u_{\ti,\Tp}(x_i)$.  Indeed, using the fact that we only
consider events in $\na{\Tp < \tf}$, the value of function~$\Psi$
in~Equation~\eqref{eq:flowuv} is rewritten as
\begin{subequations}
\begin{align}
  \Psi^{(u,\va{V},\Tp,\Td)}_{\ti,\tf}
  & (x_i)
    = \Gamma_{\Tp,\tf}^{\va{V},\Td}\bp{ \Phi_{\ti,\Tp}^u(x_i)}
  \\
  & \text{with }
    \Gamma_{\Tp,\tf}^{\va{V},\Td}: x \mapsto
    \begin{cases}
    \Phi_{\Tp,\tf}^0 \np{ x}
    &\text{if } \Tp+\Td \ge \tf
      \eqfinv
    \\
    \Phi_{\Tp+\Td,\tf}^{\va{V}} \bp{\Phi_{\Tp,\Tp+\Td}^0 \np{x}}
    &\text{if } \Tp+\Td < \tf
      \eqfinv
  \end{cases}
\end{align}
\end{subequations}
and we rewrite Problem~\eqref{pb:finalsmooth-dual} as
\begin{subequations}
  \label{pb:finalsmooth-dual1}
  \begin{align}
    \max_{\mu \geq 0} \min_{u \in \mathcal{U}}
    &\quad
      \pif \, K\bp{\Phi^u_{\ti,\tf}(x_i)} +
      \bp{1-\pif}
      \Bespc{ W_{r}\bp{ \Phi_{\ti,\Tp}^u(x_i) , \Tp,\Td,\mu}}{\Tp <\tf}
      \eqsepv
      \label{eq:finalsmooth-crit-1}
    \\
    & \text{s.t. }\pif \, C\bp{\Phi^u_{\ti,\tf}(x_i)} = 0
      \label{eq:finalsmooth-nofail-u-r-1}
      \eqfinv
  \end{align}
\end{subequations}
where the mapping $W_{r}\bp{x,\tpx,\tdx,\mu}$ is defined by
\begin{equation}
  \label{eq:inner}
  W_{r}\bp{x,\tpx,\tdx,\mu} =
  \min_{v}
  \bp{K\bp{ \Gamma_{\tpx,\tf}^{v,\tdx} \np{x}}-\mu} \fctI_{r}\bp{\big\|C\bp{ \Gamma_{\tpx,\tf}^{v,\tdx} \np{x}}\big\|}
  \eqfinp
\end{equation}
Problem~\eqref{eq:inner} is hereafter called the \emph{internal problem}.
Assuming that the function~$W_{r}$ can be computed for any given
sample of the random variables~$\Tp$ and~$\Td$, we observe that the
\emph{external} max-min problem~\eqref{pb:finalsmooth-dual1} now
involves only deterministic variables~$\mu$ and~$u$.

\subsection{Stochastic Arrow-Hurwicz algorithm\label{ssect:stochastic-arrow}}

The Arrow-Hurwicz algorithm consists in alternating gradient
steps performed on the primal decision variable and on the
dual variable associated with a dualized constraint.
The algorithm, first introduced in~\cite{Arrow_SUP_1958} in the
deterministic framework,  has been extended to the stochastic case
with deterministic constraint (see~\cite{Culioli_SIAM_1990}) and
with constraint in expectation (see~\cite{Culioli_CRAS_1995},
proofs in~\cite{Culioli_RI_1994}).
In the latter case, one considers the problem
\begin{equation*}
\min_{u \in U\ad} \besp{j(u,\boldsymbol{\xi})} \quad \text{s.t. }
\besp{\theta(u,\boldsymbol{\xi})} \leq 0 \eqfinv
\end{equation*}
where~$u$ is a deterministic control (open-loop)
and~$\boldsymbol{\xi}$ is a random variable with given probability law.
Then the~$k$-th iteration of the stochastic Arrow-Hurwicz algorithm
consists in drawing a sample~$\xi^{k}$ of~$\boldsymbol{\xi}$ and
updating the primal variable~$u$ and the dual variable~$\mu$
by (sub-)gradients steps, namely,
\begin{align*}
 & u^{k+1} = \proj{U\ad}{u^{k}-\epsilon_{u}^{k} \nabla_{u}
   \bp{j(u^{k},\xi^{k})+{\mu^{k}}\transp \theta(u^{k},\xi^{k})}}
   \eqfinv \\
 & \mu^{k+1} = \max \ba{0,\mu^{k}+\epsilon_{\mu}^{k}\theta(u^{k},\xi^{k})}
   \eqfinv
\end{align*}
where~$\epsilon_{u}^{k}$ and~$\epsilon_{\mu}^{k}$ are infinite sequences
of numbers such that
\begin{equation*}
\epsilon^{k} > 0 \eqsepv
\sum_{k=0}^{+\infty} \epsilon^{k} = +\infty \eqsepv
\sum_{k=0}^{+\infty} \bp{\epsilon^{k}}^{2} < +\infty \eqfinp
\end{equation*}

Here we consider Problem~\eqref{pb:finalsmooth} and its dual reformulation~\eqref{pb:finalsmooth-dual1}.
At iteration~$k$ of the stochastic Arrow-Hurwicz algorithm,
we have at our disposal current values~$u^{k}$ for
the control, $x^{k}$ for the state of the no-failure
scenario, $\mu^{k}$ for the multiplier, $r^{k}$ for the smoothing
coefficient, $\epsilon_{u}^{k}$ and~$\epsilon_{\mu}^{k}$ for
the gradient step lengths. The iteration consists
of the following steps.
\begin{enumerate}
\item[(1)]
Draw a failure scenario~$\SoN^{k}=(\tpxk,\tdxk)$ according
to the conditional probability law
$\nprobc{\cdot}{\tpxk\leq\tf}$.
\item[(2)]
Compute the partial (sub-)gradient
$\nabla_{x}W_{r^{k}}\bp{x^{k}(\tpxk),\tpxk,\tdxk,\mu^{k}}$
with respect to~$x$ and the partial (sub-)gradient
$\nabla_{\mu}W_{r^{k}}\bp{x^{k}(\tpxk),\tpxk,\tdxk,\mu^{k}}$
with respect to~$\mu$ of the function~$W_{r^{k}}$ (defined in Equation~\eqref{eq:inner}), optimal
cost of the inner optimization problem.
\item[(3)]
In backward time, compute the adjoint state~$\lambda^{k}(\cdot)$
on~$[\tpxk,\tf]$:
\begin{equation*}
\lambda^{k}(\tf) = \pif \nabla K\bp{x^{k}\np{\tf}} \eqsepv
\frac{\mathrm{d}\lambda^{k}}{\mathrm{d}t}(t) =
\nabla_{x}f(x^{k}(t),u^{k}(t))\transp \lambda^{k}(t) \eqfinp
\end{equation*}
\item[(4)]
At time~$\tpxk$, incorporate the jump induced by the cost
term~$(1-\pif)W_{r^{k}}$:
\begin{equation*}
\lambda_{-}^{k}(\tpxk) = \lambda^{k}(\tpxk) + (1-\pif)
\nabla_{x} W_{r^{k}}\bp{x^{k}(\tpxk),\tpxk,\tdxk,\mu^{k}} \eqfinp
\end{equation*}
\item[(5)]
In backward time, compute the adjoint state~$\lambda^{k}(\cdot)$
on~$[\ti,\tpxk]$ starting from the initial
value~$\lambda_{-}^{(k)}(\tpxk)$.
\item[(6)]
Compute the partial (sub-)gradient
$\nabla_{u}H\bp{x^{k}(\cdot),u^{k}(\cdot),\lambda^{k}(\cdot)}$
with respect to~$u$ of the Hamiltonian~$H(x,u,\lambda) = \lambda\transp f(x,u)$,
and update the control~$u$ by performing a descent
gradient step projected on the intersection of the unit
ball~$S_{1} = \ba{u \mid \|u(t)\| \leq 1 , \forall t}$
and the subset~$C_{\ad} = \ba{u \mid C(x(\tf))=0}$ of controls
to reach the final target:
\begin{equation*}
u^{k+1}(\cdot) = \proj{S_{1} \cap C_{\ad}}{u^{k}(\cdot) - \epsilon_{u}^{k}
\nabla_{u}H\np{x^{k}(\cdot),u^{k}(\cdot),\lambda^{k}(\cdot)}} \eqfinp
\end{equation*}
\item[(7)]
Update the dual variable~$\mu$ by performing a gradient ascent step:
\begin{equation*}
\mu^{k+1} = \proj{\mathbb{R}^{+}}
{\mu^{(k)} + \epsilon_{\mu}^{k} \bp{p-\pif +
(1-\pif)\nabla_{\mu}W_{r^{k}}\bp{x^{k}(\tpxk),\tpxk,\tdxk,\mu^{k}}}}
\eqfinp
\end{equation*}
\item[(8)]
Compute in forward time the state~$x^{k+1}(\cdot)$ on~$[\ti,\tf]$:
\begin{equation*}
x^{k+1}(\ti) = x_{\mathrm{i}} \eqsepv
\frac{\mathrm{d}x^{k+1}}{\mathrm{d}t}(t) = f(x^{k+1}(t),u^{k+1}(t)) \eqfinp
\end{equation*}
\end{enumerate}
The main outputs of the algorithm are the optimal state
trajectory~$x\opt$ and control trajectory~$u\opt$
for the no-failure scenario.

\begin{remark}
\label{rm:projection}
As will be explained in~\S\ref{ssec:inner-problem},
the target constraint~$C(x(\tf))=0$ is treated by duality
when solving the inner problem in order to compute~$W_{r^{k}}\bp{x^{k}(\tpxk),\tpxk,\tdxk,\mu^{k}}$
and its partial (sub-)gradients. By contrast, in the stochastic
Arrow-Hurwicz algorithm described above, the target
constraint~$C(x(\tf))=0$ associated with the no-failure scenario
is treated by projection. The reason for that is that dualizing
this last target constraint has led in our numerical experiments
to an instable behavior of the stochastic Arrow-Hurwicz algorithm
for high probability level~$p$. Note that the projection
on the intersection~$S_{1} \cap C_{\ad}$ involves the resolution
of an optimal control problem. Indeed, using the notation
$u^{k+\frac{1}{2}}(\cdot)=u^{k}(\cdot)-\epsilon_{u}^{k}\nabla_{u}
 H\np{x^{k}(\cdot),u^{k}(\cdot),\lambda^{k}(\cdot)}$,
this projection amounts to solving:
\begin{subequations}
\label{pb:projection}
\begin{align}
  \min_{u} \frac{1}{2}
  & \int_{\ti}^{\tf} \bnorm{u(t)-u^{k+\frac{1}{2}}(t)}^{2} \mathrm{d}t \\
  \text{s.t. }
  & x(\ti) = x_{\mathrm{i}} \eqsepv
    \frac{\mathrm{d}x}{\mathrm{d}t}(t) = f\bp{x(t),u(t)} \eqfinv  \\
  & \norm{u(t)} \leq 1 \;\;\; \forall t \in [\ti,\tf] \eqfinv \\
  & C\bp{x(\tf)}=0 \eqfinp
\end{align}
\end{subequations}
This problem is well conditioned and can be easily solved using
an adapted optimal control algorithm such as the one used to solve
the inner problem (see~\S\ref{ssec:inner-problem}).
\end{remark}

In summary, one iteration of the stochastic Arrow-Hurwicz algorithm
aims at solving two deterministic optimal control problems:
\begin{itemize}
\item Problem~\eqref{eq:inner} for computing the optimal value
$W_{r^{k}}\bp{x^{k}(\tpxk),\tpxk,\tdxk,\mu^{k}}$ and the associated
derivatives,
\item Problem~\eqref{pb:projection} for computing the projection
on~$S_{1} \cap C_{\ad}$.
\end{itemize}
In the next section, we detail the way Problem~\eqref{eq:inner}
is treated.

\subsection{A heuristic for solving the inner Problem~\eqref{eq:inner}\label{ssec:inner-problem}}

As just explained in \S\ref{ssect:stochastic-arrow}, at each iteration
of the stochastic Arrow-Hurwicz algorithm, we have to compute
$W_{r^{k}}\bp{x^{k}(\tpxk),\tpxk,\tdxk,\mu^{k}}$ which is the optimal
value of the deterministic inner optimal control problem~\eqref{eq:inner},
as well as its partial (sub-)gradient. To perform that task,
given $(x,\tpx,\tdx,\mu)= (x^{k}(\tpxk),\tpxk,\tdxk,\mu^{k})$,
we have to solve the optimal control problem:
\begin{subequations}
  \label{pb:inner-problem}
  \begin{align}
    & \min_{v} \bp{K\bp{x(\tf)}-\mu} \fctI_{r}\bp{\big\|C\bp{x(\tf)}\big\|}
      \label{eq:inner-crit} \eqfinv \\
    &\text{s.t. }
      x(\tpx) = x \eqsepv
      \frac{\mathrm{d}x}{\mathrm{d}t}(t) = f\bp{x(t),v(t)}
      \label{eq:inner-dyn} \eqfinv \\
    &\hphantom{\text{s.t. }} v(t) = 0 \quad \forall t \in [\tpx,\tpx+\tdx]
      \notag \eqfinv \\
    &\hphantom{\text{s.t. }} \|v(t)\| \leq 1 \quad \forall t \in [\tpx+\tdx,\tf]
      \label{eq:inner-bound} \eqfinp
  \end{align}
\end{subequations}
Problem~\eqref{pb:inner-problem} is difficult to tackle numerically
because, for a control~$v$ that does not bring the final state constraint
into a ball of radius~$r$ (assumed to be or to become small), the associated
cost is zero, as is its sensitivity to the control.
For that reason, in order to obtain a numerical solution of Problem~\eqref{pb:inner-problem}, we solve the optimal control
problem of minimizing the final cost under the constraint
of hitting the target exactly, that is,
\begin{subequations}
  \label{pb:innerequiv-problem}
  \begin{align}
    & \min_{v} K\bp{x(\tf)}-\mu
      \label{eq:innerequiv-crit} \eqfinv \\
    &\text{s.t. \eqref{eq:inner-dyn}, \eqref{eq:inner-bound}, and}\\
    &\hphantom{\text{s.t. }} C\bp{x(\tf)} = 0
      \label{eq:innerequiv-cible}
      \eqfinv
  \end{align}
\end{subequations}
using the deterministic Arrow-Hurwicz algorithm~\cite{Arrow_SUP_1958},
where the final constraint~\eqref{eq:innerequiv-cible} is dualized using
an \emph{augmented Lagrangian}\footnote{that is, additional duality
terms $\nu\transp C\bp{x(\tf)} + \frac{c}{2}\bnorm{C\bp{x(\tf)}}^{2}$}
in the algorithm. The deterministic Arrow-Hurwicz algorithm produces
a sequence~$\ba{\delta^{\kappa}}_{\kappa\in\mathbb{N}}$ where $\delta^{\kappa}$
is the deviation~$C\bp{x(\tf)}$ from target at iteration~$\kappa$, and
a sequence~$\ba{\upsilon^{\kappa}}_{\kappa\in\mathbb{N}}$
where $\upsilon^{\kappa}$ is the multiplier associated with the final
constraint~\eqref{eq:innerequiv-cible} at iteration~$\kappa$. Several
cases have to be considered when solving Problem~\eqref{pb:innerequiv-problem}.
\begin{enumerate}
\item The sequence~$\ba{\delta^{\kappa}}_{\kappa\in\mathbb{N}}$
  does not stabilize: we consider that the algorithm diverges.
\item The sequence~$\ba{\delta^{\kappa}}_{\kappa\in\mathbb{N}}$
  converges to~$\delta\opt$:
  \begin{enumerate}
  \item if~$\nnorm{\delta\opt}=0$, the final target constraint
    is satisfied, the algorithm converges to a solution with
    a final state~$x\opt(\tf)$:
    \begin{itemize}
    \item if~$K\bp{x\opt(\tf)} - \mu \geq 0$, the solution
      of Problem~\eqref{pb:inner-problem} is to do nothing
      (see postponed Lemma~\ref{rm:negativity}),
    \item otherwise, the solution of Problem~\eqref{pb:inner-problem}
      is very similar to the solution of Problem~\eqref{pb:innerequiv-problem} since we are as close
      as possible to the target (see Appendix~\ref{sec:gradient}),
      and the optimal value~$\upsilon\opt$ of the multiplier allows
      to compute the derivatives of~$W_{r}$ used in the stochastic
      Arrow-Hurwicz algorithm,
    \end{itemize}
  \item otherwise, the target cannot be reached and~$\nnorm{\delta\opt}$
    is the distance by which the target is missed: the sequence of
    multipliers~$\ba{\upsilon^{\kappa}}_{\kappa\in\mathbb{N}}$ goes to infinity
    and the deterministic Arrow-Hurwicz algorithm aims to get as close as
    possible to the final target, with a final state~$x\opt(\tf)$:
    \begin{itemize}
    \item if~$K\bp{x\opt(\tf)} - \mu \geq 0$, the solution
      of Problem~\eqref{pb:inner-problem} is to do nothing,
    \item if $~\nnorm{\delta\opt} > r$, the solution
      of Problem~\eqref{pb:inner-problem} is to do nothing,
    \item otherwise, the solution of Problem~\eqref{pb:innerequiv-problem}
      is the solution of Problem~\eqref{pb:inner-problem}.
    \end{itemize}
  \end{enumerate}
\end{enumerate}
Note that the deterministic Arrow-Hurwicz algorithm used to solve
Problem~\eqref{pb:innerequiv-problem} takes the distinctions made in
Appendix~\ref{sec:gradient} concerning the study of
Problem~\eqref{pb:inner-problem} into account, that is, the case where the
target is exactly reached has to be treated separately of the case where the
target is reached up to~$r$. So far, we do not have formal proof of the
equivalence suggested above between Problems~\eqref{pb:inner-problem}
and~\eqref{pb:innerequiv-problem}.  This is an area where progress should be
made.

From the expression of the criterion~\eqref{eq:inner-crit} and
the shape of~$\fctI_{r}$ given by~\eqref{eq:smooth-indicator-function},
we deduce Lemma~\ref{rm:negativity} used in the heuristic description.
\begin{lemma}
  \label{rm:negativity}
  Let~$\bp{x\opt,v\opt}$ be an optimal solution of the inner
  problem~\eqref{pb:inner-problem} and assume that the optimal solution is
  such that $\bnorm{C\bp{x\opt(\tf)}} < r$ so that~$\fctI_{r}$
  is non zero. Then, we have that
  \begin{equation}
    K\bp{x\opt(\tf)} - \mu \leq 0  \eqfinp
  \end{equation}
\end{lemma}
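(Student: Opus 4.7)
The plan is a short proof by contradiction, leveraging the fact that ``doing nothing'' is always an admissible, zero-cost competitor in the inner problem~\eqref{pb:inner-problem}.

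Assume, for contradiction, that $K\bp{x\opt(\tf)} - \mu > 0$. Combined with the standing hypothesis $\bnorm{C\bp{x\opt(\tf)}} < r$, which through~\eqref{eq:smooth-indicator-function} gives $\fctI_{r}\bp{\bnorm{C\bp{x\opt(\tf)}}} > 0$, the optimal value of the criterion~\eqref{eq:inner-crit} is then strictly positive.

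Next, I would exhibit a feasible control $v^{\circ}$ whose associated cost is zero, which contradicts the optimality of $v\opt$. Since $\fctI_{r}$ vanishes on $[r,+\infty)$ by~\eqref{eq:smooth-indicator-function}, any admissible control whose associated final state satisfies $\bnorm{C\bp{x^{\circ}(\tf)}} \geq r$ produces a zero value in~\eqref{eq:inner-crit}. The natural candidate is the ``do nothing'' recourse $v^{\circ} \equiv 0$, which trivially fulfils the norm bound~\eqref{eq:inner-bound}: after engine recovery, the satellite coasts along a Keplerian arc that, generically, leaves the final state much farther than $r$ away from the target. The existence of this competitor directly contradicts the strict positivity of the optimal value, forcing $K\bp{x\opt(\tf)} - \mu \leq 0$.

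The only non-routine point, and what I expect to be the main obstacle, is the claim that \emph{some} admissible control produces $\bnorm{C\bp{x(\tf)}} \geq r$. This is a mild controllability-type property of the dynamics~\eqref{eq:equinoxial} that is physically immediate in the small-$r$ regime in which the smoothing is meaningful, and I would invoke it as part of the standing setup already assumed for the stochastic Arrow-Hurwicz algorithm. If one insists on a self-contained argument, the gap can be closed via a continuity argument on the flow map $\Phi$: any continuous path of admissible controls joining the ``hitting'' optimum (for which $\bnorm{C}<r$) to any coasting control in the ``missing'' region (with $\bnorm{C}\geq r$) must cross the boundary $\bnorm{C}=r$, and on that boundary the cost value is exactly zero.
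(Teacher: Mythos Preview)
Your argument is essentially the paper's: assume $K\bp{x\opt(\tf)}-\mu>0$, observe that one can always ``miss the target'' and thereby achieve cost~$0$ via the indicator, contradicting optimality. You are in fact more careful than the paper in flagging the one genuine gap --- the existence of an admissible control with $\bnorm{C\bp{x(\tf)}}\geq r$ --- which the paper simply asserts (and which the authors themselves annotate internally as ``a reasonable hypothesis but not implied''); your closing continuity remark is harmless but superfluous, since any control already in the region $\bnorm{C}\geq r$ gives cost zero directly.
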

\begin{proof}
  Indeed, \jpc{en fait c'est une hypothèse raisonable mais pas impliqué}
  it is always possible to miss the target and thus to
  obtain a cost equal to~$0$ thanks to the indicator function,
  so that~$K\bp{x\opt(\tf)}-\mu > 0$ cannot induce an optimal
  cost in~\eqref{pb:inner-problem}.
\end{proof}

\subsection{Parameters of the stochastic Arrow-Hurwicz
            algorithm\label{ssec:parameters}}

As indicated in~\S\ref{ssect:stochastic-arrow}, some parameters
have to be chosen when implementing the stochastic Arrow-Hurwicz
algorithm. As with any standard stochastic gradient algorithm, the
step lengths~$\epsilon_{u}^{k}$ and~$\epsilon_{\mu}^{k}$ must be
properly set. According to the theory (see~\cite{Culioli_CRAS_1995}),
these parameters should vary in $1/k$ and therefore be of
the general form:
\begin{equation}
\label{eq:step-lenght}
\frac{\alpha}{\beta+k} \eqfinp
\end{equation}

In order to properly set the smoothing coefficient~$r^{k}$,
we get inspiration from~\cite{Andrieu_EJOR_2011}
(with proofs in the companion paper~\cite{Andrieu_arXiv_2007}),
where a general problem of minimizing a criterion in expectation
under a probability constraint is studied. As a result,
the smoothing coefficient at
iteration~$k$ of the stochastic Arrow-Hurwicz algorithm
is chosen as (see details in Appendix~\ref{sec:analogy})
\begin{equation}
  \label{eq:smoothing-coeff}
  r^{k} = \frac{a}{b+k^{1/3}} \eqfinp
\end{equation}
Note that such a smoothing coefficient (in~$k^{-1/3}$) is divided by~$10$
after~$1000$ iterations, and by~$100$ after~$10^{6}$ iterations.

\section{Real-life application\label{sect:realproblem}}

In this section, we describe the specific problem we addressed
in this study and give numerical results for different probability
levels required for hitting the target.

\subsection{Problem data\label{ssec:problem-data}}

The numerical data~(see~\S\ref{ssect:model}) corresponding to the spatial
mission that we study in this paper can be found in~\cite{Dargent_Report_2004}.
They involve a rescaling for proper numerical treatment and are as follows.
\begin{itemize}
\item Parameter:\footnote{Before scaling,
the parameters~$T$, $g_{0}$ and~$I_{\mathrm{sp}}$ are~$0.2$,
$9.80665$ and~$1500$ respectively.}
\begin{equation*}
  T = 0.0336750 \eqsepv
  g_{0} I_{\mathrm{sp}} = 0.4936891 \eqsepv
  \paramu = 1 \eqfinp
\end{equation*}
\end{itemize}
\begin{itemize}
\item Initial and final times:\footnote{Before scaling,
the initial and final times are~$40$ and~$510$ days respectively.}
\begin{equation*}
  \ti = 0.6888699 \eqsepv \tf = 8.7830909 \eqfinp
\end{equation*}
\end{itemize}
\begin{itemize}
\item Satellite initial state:\footnote{Before scaling, the initial
mass~$x_{\mathrm{i,7}}$ of the satellite is~$1000$ kilograms.}
\begin{equation*}
  x_{\mathrm{i}} = \left(
    0.999702 \;\; - 0.003359 \;\; 0.016942 \;\;
    - 0.000011 \;\; 0.000007 \;\; 36.52939 \;\; 1
  \right)\transp \eqfinp
\end{equation*}
\end{itemize}
\begin{itemize}
\item Rendez-vous target coordinates:
\begin{multline*}
  \left( x_{\mathrm{f},1} \;\; x_{\mathrm{f},2} \;\; x_{\mathrm{f},3} \;\;
         x_{\mathrm{f},4} \;\; x_{\mathrm{f},5} \;\; x_{\mathrm{f},6} \right)
  = \\
  \left( 1.511514 \;\; 0.085367 \;\; - 0.037923 \;\;
         0.010474 \;\; 0.012275 \;\;  42.17610 \right) \eqfinp
\end{multline*}
\end{itemize}
\begin{itemize}
\item Failure probability laws \\
The instant at which the failure occurs and the failure duration
follow exponential probability laws, whose parameters are
\begin{itemize}
\item failure start time:
$\underline{t_{\mathrm{p}}}=0.68887$ and~$\lambda_{\mathrm{p}}=15.1711$,
\item failure duration:
$\underline{t_{\mathrm{d}}}=0.03444$ and~$\lambda_{\mathrm{d}}=0.05350$.
\end{itemize}
\end{itemize}
Then, the probability weight~$\pif$ of the no-failure scenario is:
\begin{equation}
\label{eq:pif}
\pif = 0.58653 \eqfinp
\end{equation}

\subsection{Deterministic problem solution\label{ssec:deter-solution}}

The convergence of the deterministic Arrow-Hurwicz algorithm as well as
the optimal control obtained are depicted Figure~\ref{fig:deter-problem}.

\begin{figure}[hbtp]
\begin{center}
  \mbox{\includegraphics[width=0.385\textwidth]{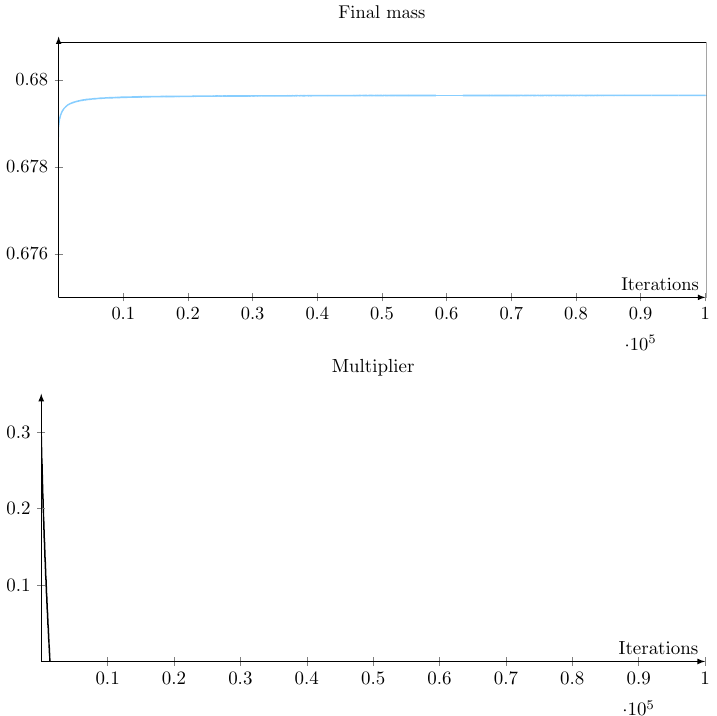}}
  \mbox{\includegraphics[width=0.5\textwidth]{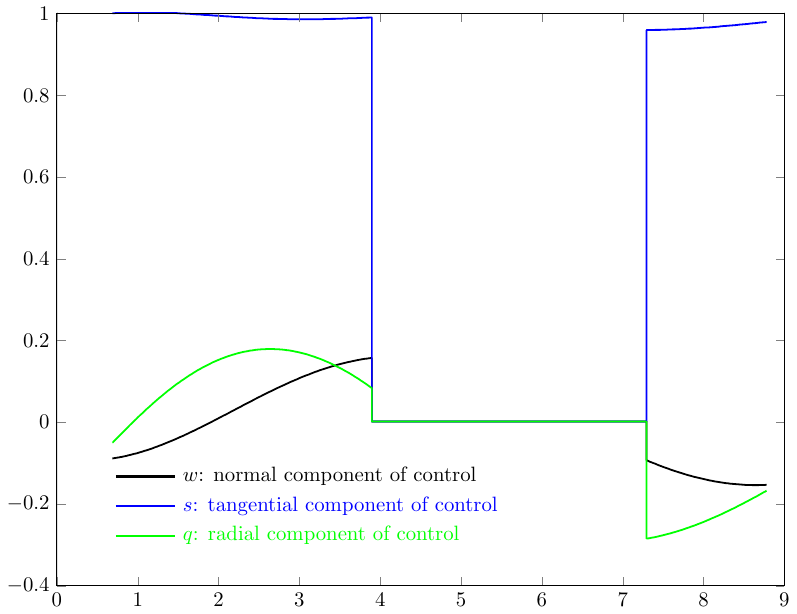}}
\end{center}
\caption{Convergence and solution of the deterministic problem}
\label{fig:deter-problem}
\end{figure}

In the deterministic solution, we distinguish three periods in
the mission duration:
\begin{itemize}
\item period~(a): engine at maximum thrust (between days~$40$ and~$226.3$),
\item period~(b): engine stopped (between days~$226.3$ and~$423.8$),
\item period~(c)~: engine at maximum thrust (between days~$423.8$ and~$510$).
\end{itemize}
After scaling, the values of the instants delimiting these three periods are:
\begin{itemize}
\item[] $\ti = 0.68887$ (initial time),
\item[] $t_{\mathrm{a}} \approx 3.8983$ (end of period~(a)),
\item[] $t_{\mathrm{b}} \approx 7.2980$ (end of period~(b)),
\item[] $\tf = 8.78309$ (final time).
\end{itemize}
These notations will be used when analyzing the results
of the stochastic Arrow-Hurwicz algorithm.

It is easy to prove that a failure occurring before the end
of period~(a) does nor prevent from reaching the target.
Furthermore, a failure occurring during period~(b) has no effect since
the engine is stopped. The probability weight~$\pi_{\mathrm{r}}$
that a failure occurs before the end of period~(b) is:
\begin{equation*}
\pi_{\mathrm{r}} \approx 0.35020 \eqfinp
\end{equation*}
Finally, the probability weight~$p^{\mathrm{det}}$ of reaching
the target when applying the deterministic optimal control is
\begin{equation}
\label{eq:pdet-value}
p^{\mathrm{det}} = \pif + \pi_{\mathrm{r}} \approx 0.93673 \eqfinp
\end{equation}

\subsection{Stochastic problem solutions for various probability values
\label{ssec:stoch-solution}}

We recall that each iteration of the stochastic Arrow-Hurwicz algorithm
mainly consists in solving
\begin{itemize}
\item the deterministic optimal control problem associated
with the inner problem,
\item the deterministic optimal control problem associated
with the projection.
\end{itemize}
On an Intel Core2 Quad CPU 2.83~Ghz computer running Linux, the CPU time
associated with the resolution of these two problems is around~$2.2$ seconds.
In all the numerical tests carried out in this study, the stochastic
Arrow-Hurwicz algorithm is initialized with the deterministic optimal
control trajectory, and an initial value of the multiplier~$\mu$ equal
to~$0.325$. The stochastic Arrow-Hurwicz algorithm is stopped after
$100,000$ iterations.

\subsubsection{Numerical tests with a low probability level~$p$
\label{sssec:low}}

To begin with, we are interested in probability levels~$p$ that
are strictly lower than the probability~$p^{\mathrm{det}}$
to reach the target when using the deterministic solution.
We observe (see Figure~\ref{fig:probasfaibles}) that the algorithm
converges very satisfactorily.

\begin{figure}[hbtp]
\begin{center}
  \mbox{
  \includegraphics[width=0.385\textwidth]{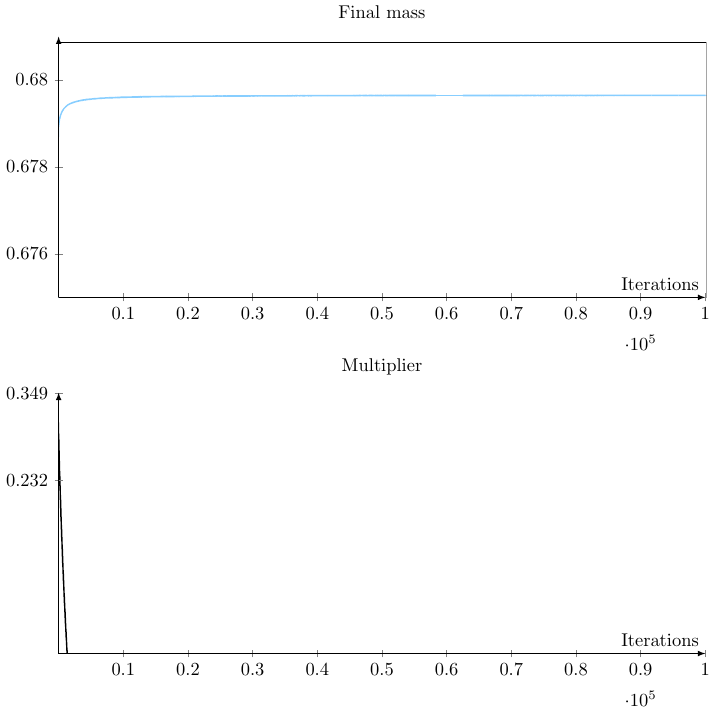}
  \includegraphics[width=0.5\textwidth]{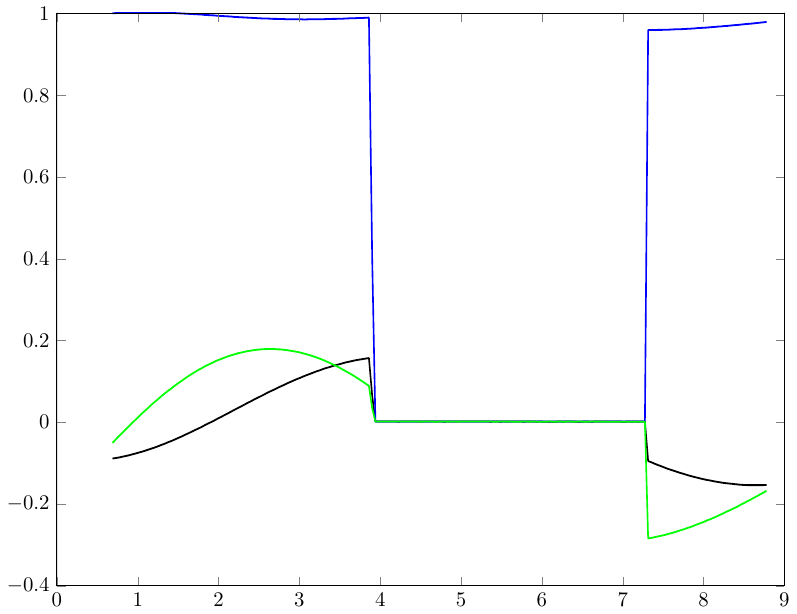}}

  \includegraphics[width=0.385\textwidth]{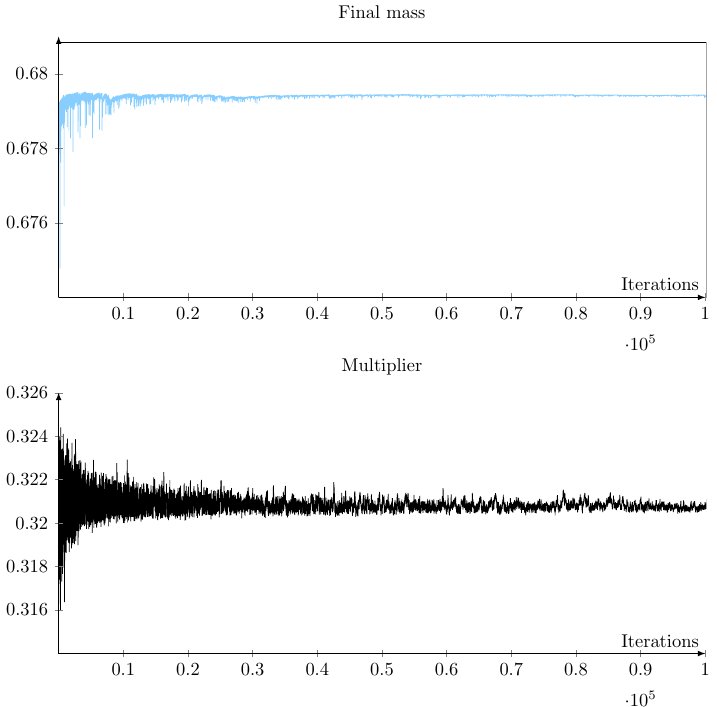}
  \includegraphics[width=0.5\textwidth]{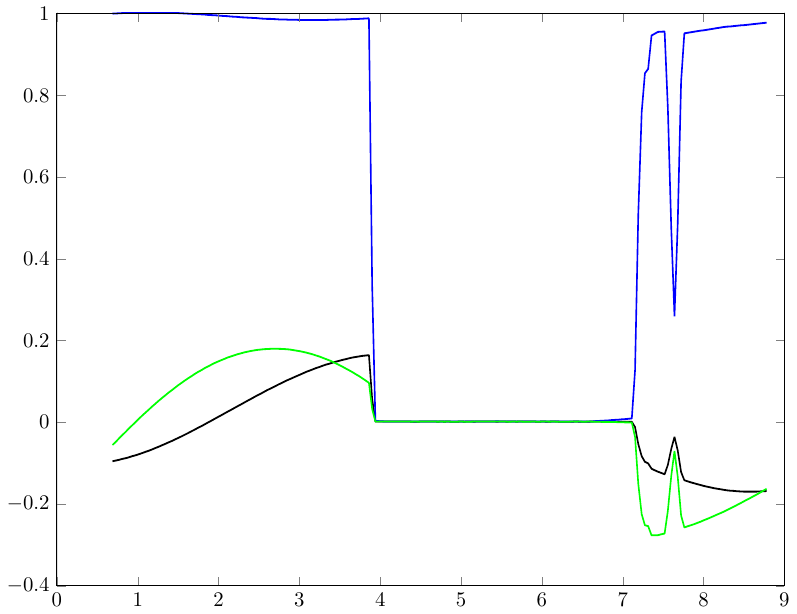}

  \includegraphics[width=0.385\textwidth]{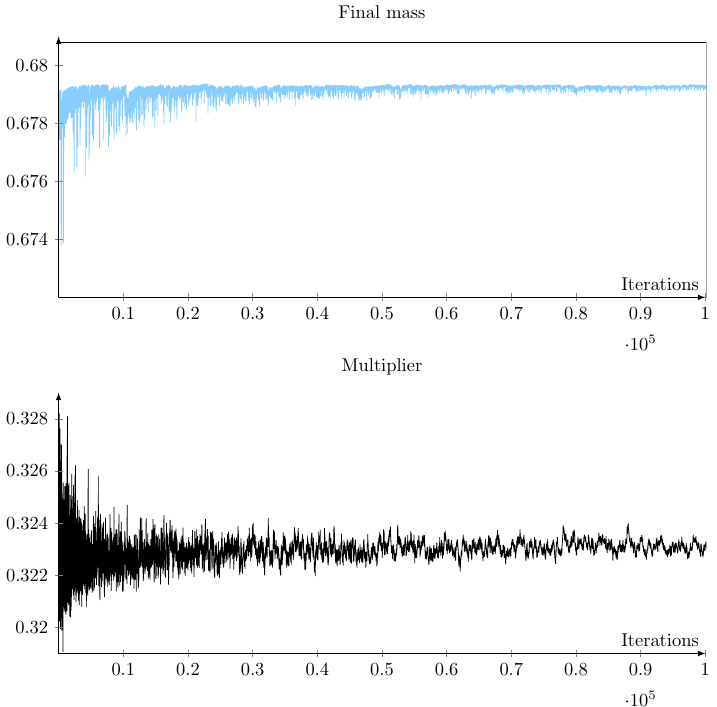}
  \includegraphics[width=0.5\textwidth]{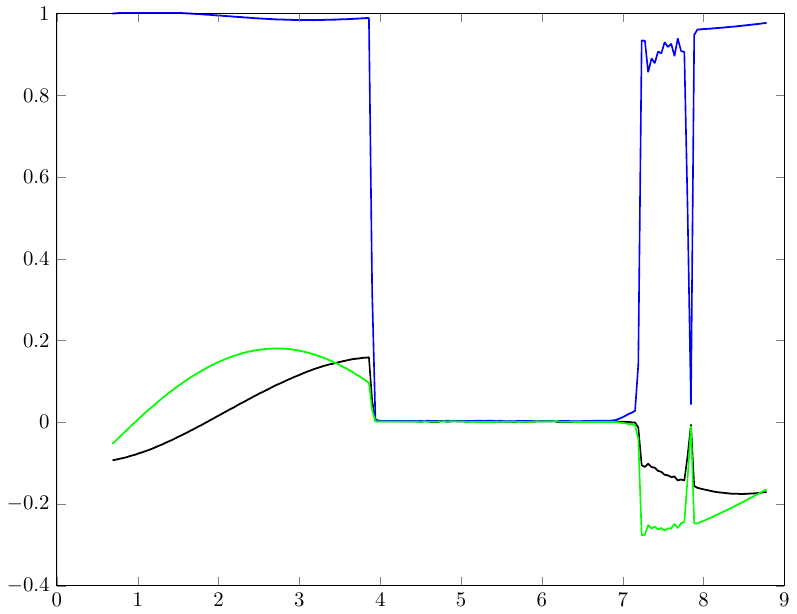}
\end{center}
\caption{Results for~$p=0.550$ (top), $p=0.750$ (middle) and~$p=0.925$ (bottom)}
\label{fig:probasfaibles}
\end{figure}

\begin{itemize}
\item For the probability level~$p=0.550$, that is, less than
the probability of the no-failure scenario~$\pif$ (see~\eqref{eq:pif}),
the control trajectories of the no-failure scenario are identical
to those of the deterministic optimal solution;
the probability constraint is not active (multiplier equal to~$0$
since~$\pif>0.550$) and the optimal fuel consumption is~$0.32024$.
\item For the probability level~$p=0.750$, the convergence is more
turbulent than in the previous case; the optimal
fuel consumption of the no-failure scenario is slightly higher than
in the previous case ($0.32045$), whereas the multiplier stabilizes
around the value~$0.3208$. However the control trajectories obtained
in this case are rather different from those associated with the no-failure
problem, the main difference being located after instant~$t_{\mathrm{b}}$
when a reduction of the engine speed appears.
\item For the probability level~$p=0.925$, the optimal fuel
consumption of the no-failure scenario is~$0.32054$, whereas
the multiplier stabilizes around the value~$0.3233$, that is, a small
difference with respect to the previous two cases.
The control distorsion after instant~$t_{\mathrm{b}}$ is more
pronounced than in the previous case, with very little
impact on the optimal consumption.
\end{itemize}
One might think that the optimal control for the problem under
probability constraint is identical to the optimal control
in the deterministic case as soon as the probability level~$p$
is less than~$p^{\mathrm{det}}$ in~\eqref{eq:pdet-value}.
In fact, this is not the case: the optimal control of
the deterministic problem is just \emph{admissible} for the
stochastic optimization problem, because the two problems
have different cost functions.

\subsubsection{Numerical tests with a medium probability level~$p$
\label{sssec:medium}}

We are then interested in probability levels~$p$ that
are slightly higher than the probability~$p^{\mathrm{det}}$
to reach the target when using the deterministic solution.
We observe (see Figure~\ref{fig:probasmoyennes}) that the algorithm
still converges very satisfactorily.

\begin{figure}[hbtp]
\begin{center}
  \includegraphics[scale=0.5]{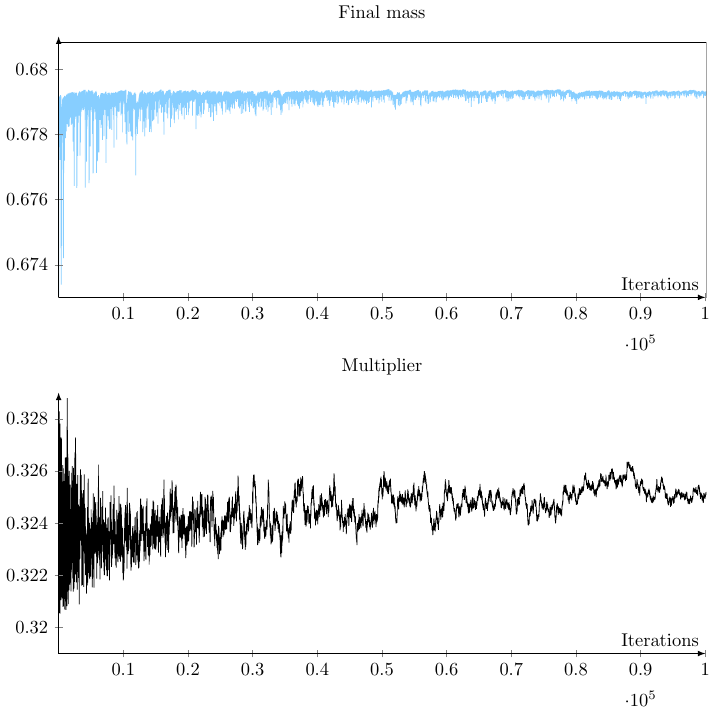}
  \includegraphics[scale=0.5]{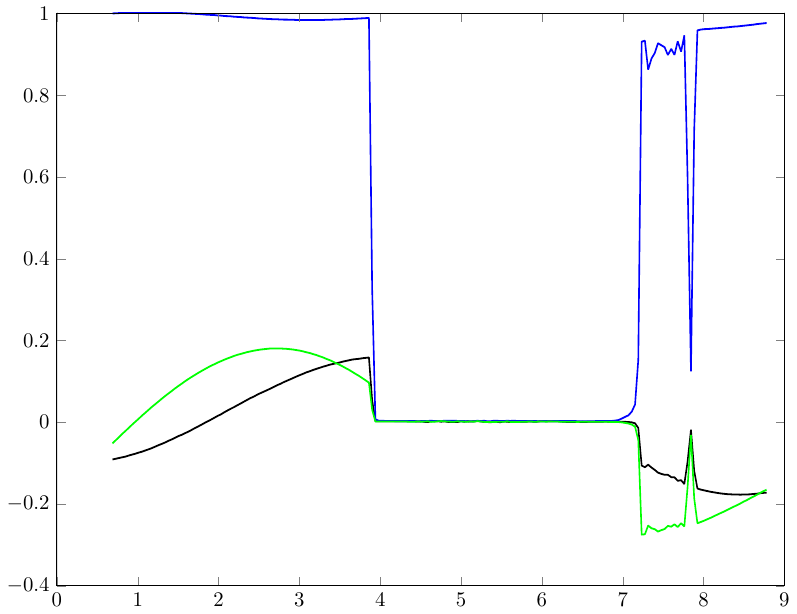}

  \includegraphics[scale=0.5]{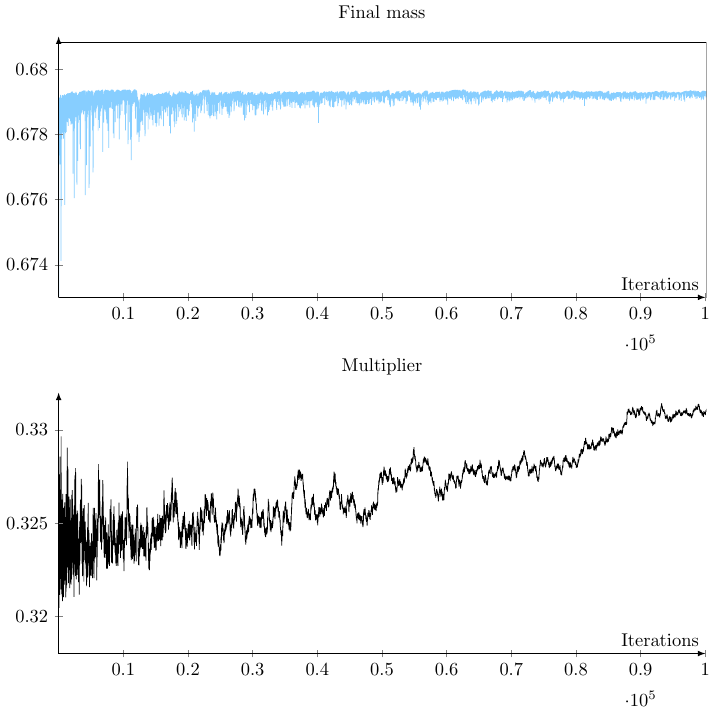}
  \includegraphics[scale=0.5]{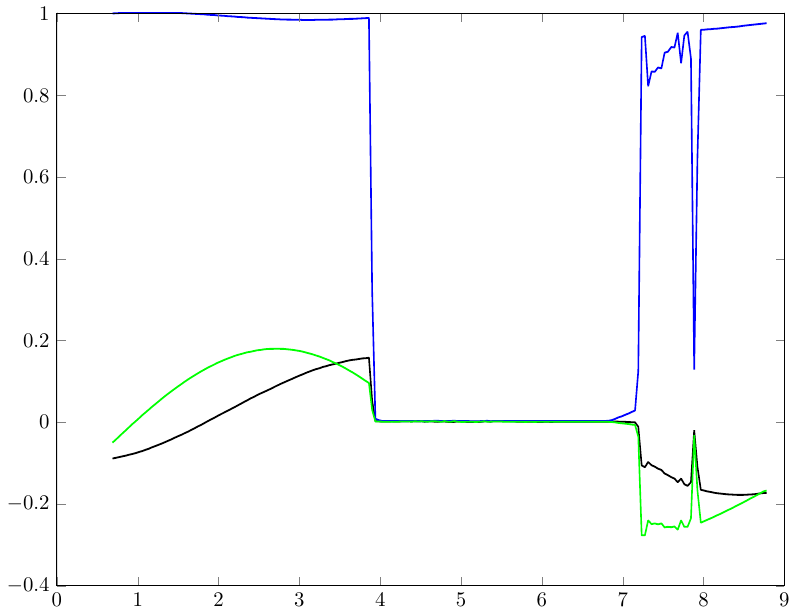}

  \includegraphics[scale=0.5]{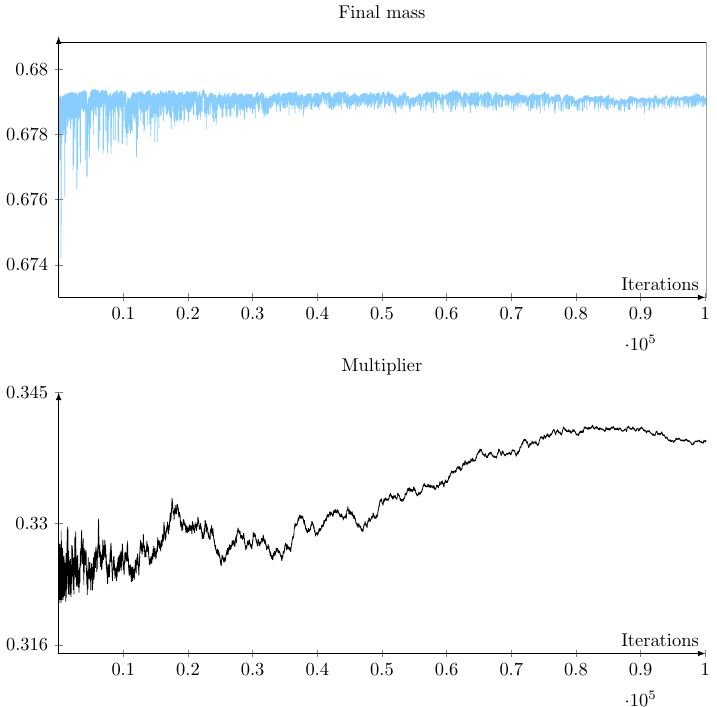}
  \includegraphics[scale=0.5]{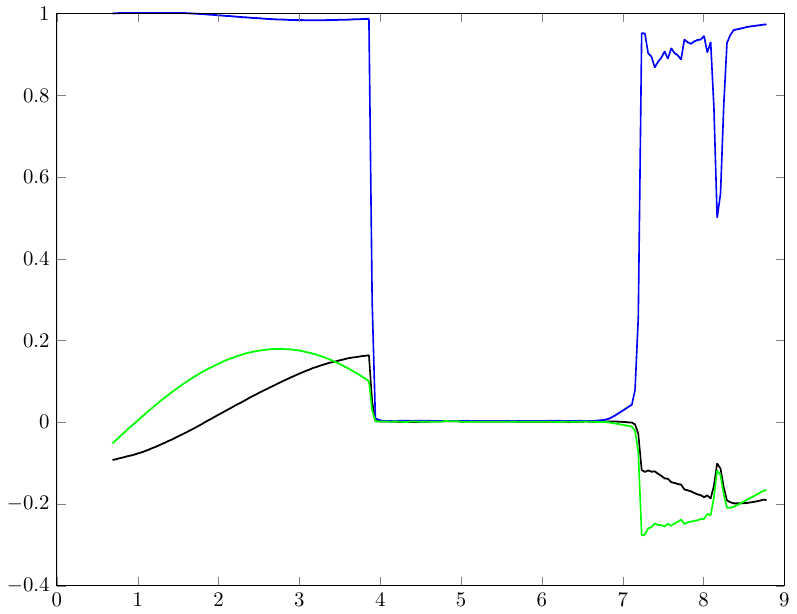}
\end{center}
\caption{Results for~$p=0.950$ (top), $p=0.960$ (middle) and~$p=0.970$ (bottom)}
\label{fig:probasmoyennes}
\end{figure}

\begin{itemize}
\item For the probability level~$p=0.950$, the solution behavior is very close
  to that observed in~\S\ref{sssec:low} for the probability level~$p=0.925$~:
  the optimal fuel consumption is~$0.3206$ and the multiplier stabilizes around
  the value~$0.3254$.
\item For a probability level~$p=0.960$, the optimal control of the no-failure
  scenario and the associated fuel consumption do not differ from those obtained
  for~$p=0.960$. But the optimal multiplier increases significantly, and its
  evolution during the algorithm is particularly slow during the first $40,000$
  iterations.
\item Finally, for the probability level~$p=0.970$, the multiplier increase over
  the iterations is even more impressive. The optimal control of the no-failure
  scenario also changes, the reduction of the engine speed after
  time~$t_{\mathrm{b}}$ occurring later than in previous cases.
\end{itemize}
For those medium values of the probability level, we can see
that satisfying the probability constraint is made possible
by two actions involved simultaneously.
\begin{enumerate}
\item On the one hand, the increase in the multiplier means that fewer and fewer
  failures are being rejected for consumption reasons (remember that the term
  $K\bp{x^{\xi}(\tf)}-\mu$ must remain negative for a failure to be recovered):
  during the last~$10,000$ iterations of the stochastic Arrow-Hurwicz algorithm,
  that is, when the algorithm is almost stabilized, the number of failures
  eliminated for this reason drops from~$250$ for~$p=0.950$ to~$6$
  for~$p=0.970$.
\item On the other hand, the later the engine speed is reduced during the last
  period~(c) of the mission, the more failures can be recovered, since any
  failure occurring between time~$t_{\mathrm{b}}$ and the time corresponding to
  this speed reduction time can be recovered.
\end{enumerate}

\subsubsection{Numerical tests with a high probability level~$p$
\label{sssec:high}}

We are now interested in probability levels~$p$ that
are significantly higher than the probability~$p^{\mathrm{det}}$
to reach the target when using the deterministic solution, namely,
$p=0.980$, $p=0.985$ and~$p=0.990$.
The results are presented at Figure~\ref{fig:probasfortes}.

\begin{figure}[hbtp]
\begin{center}
  \includegraphics[scale=0.5]{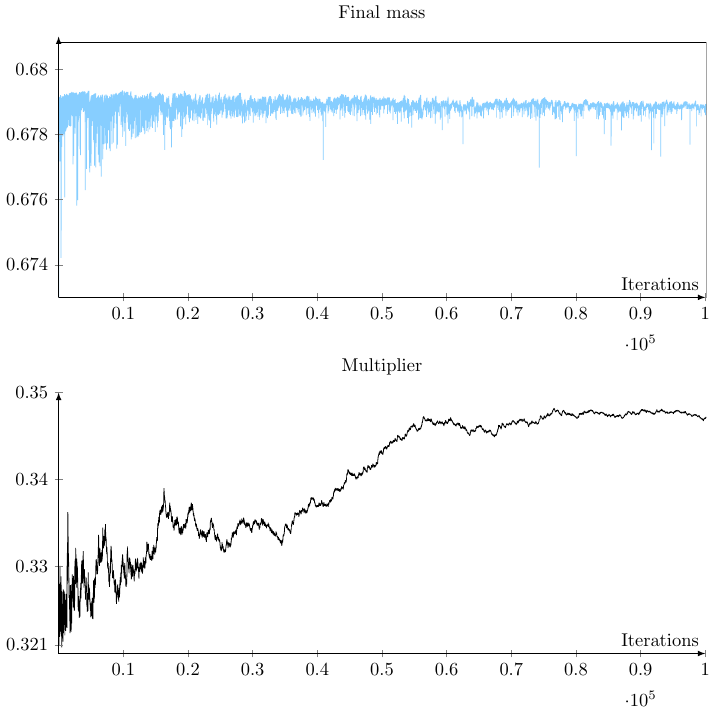}
  \includegraphics[scale=0.5]{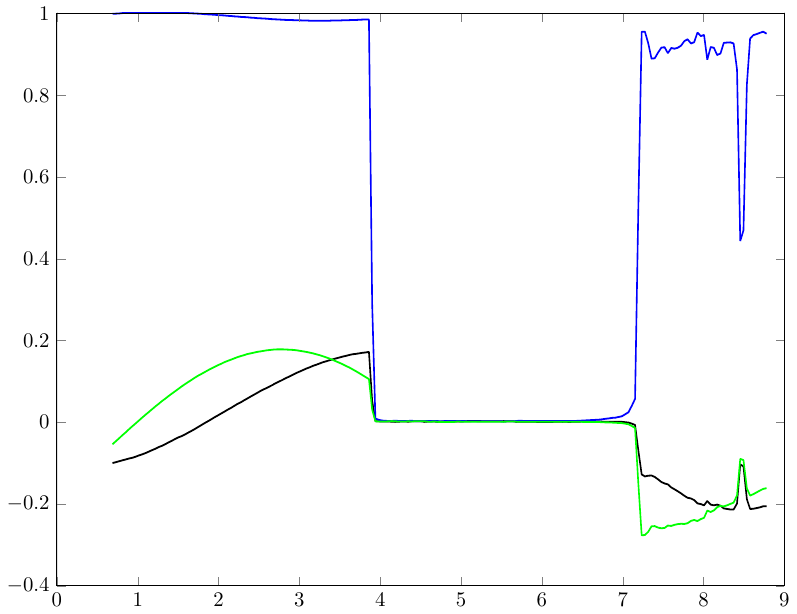}

  \includegraphics[scale=0.5]{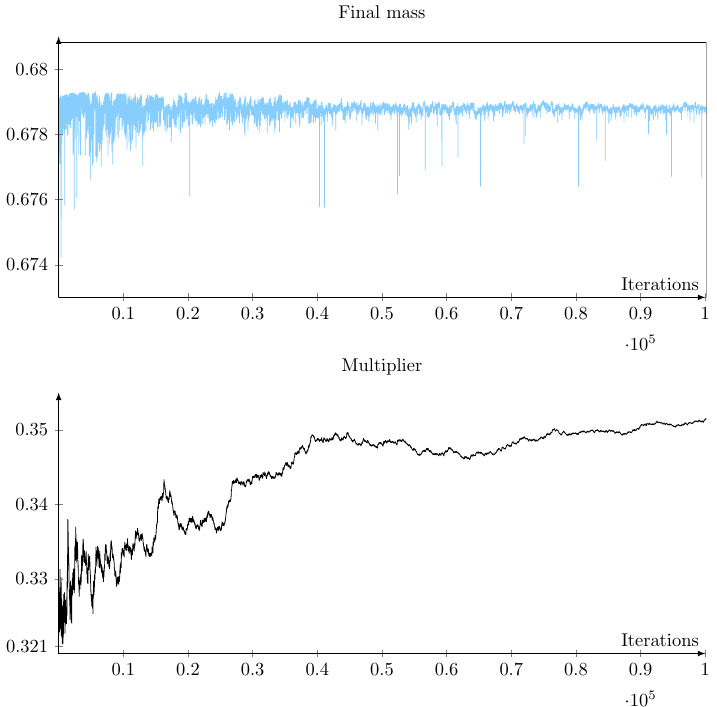}
  \includegraphics[scale=0.5]{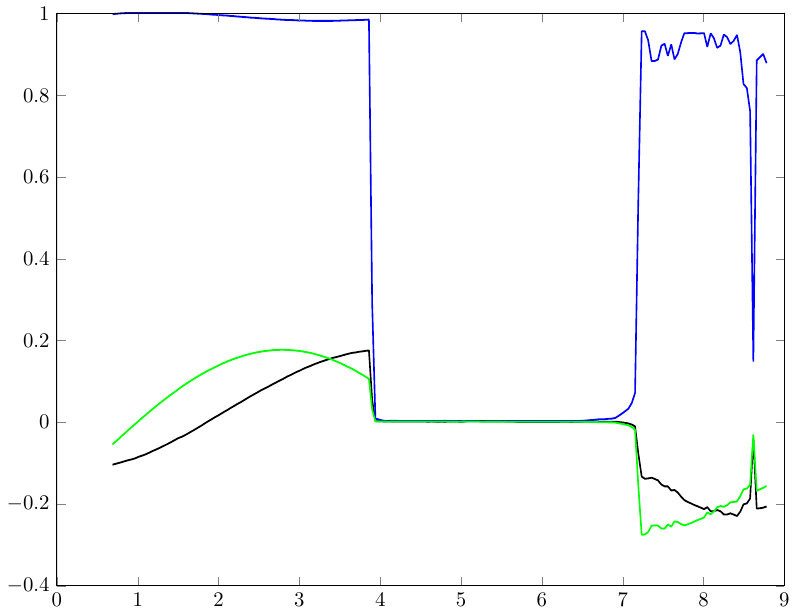}

  \includegraphics[scale=0.5]{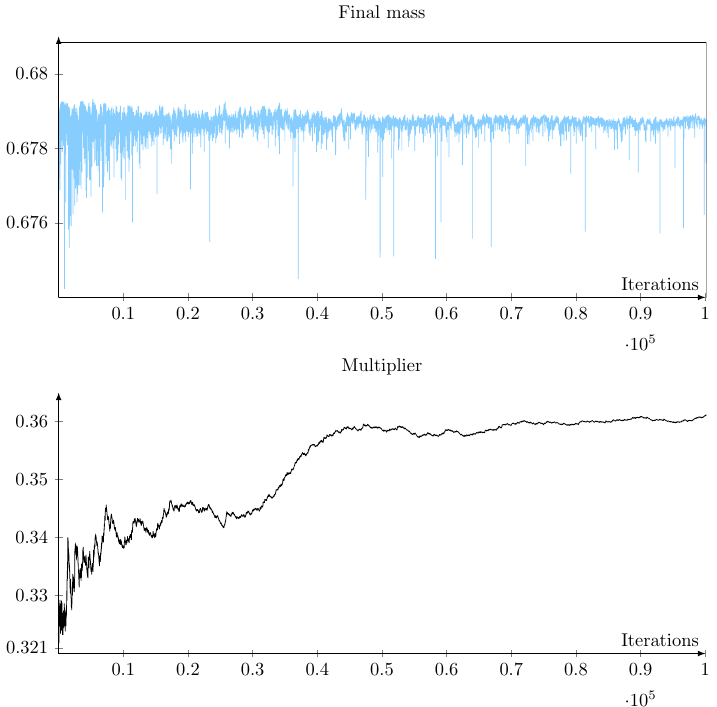}
  \includegraphics[scale=0.5]{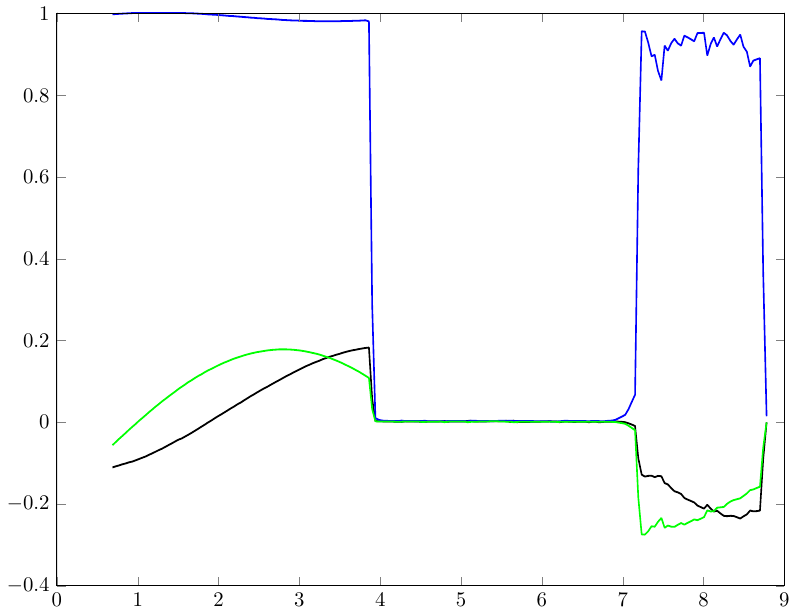}
\end{center}
\caption{Results for~$p=0.980$ (top),~$p=0.985$ (middle) and~$p=0.990$ (bottom)}
\label{fig:probasfortes}
\end{figure}

The results obtained for the medium probability levels are confirmed
by these new numerical experiments. Surprisingly, the convergence
of the multiplier~$\mu$ associated with the probability constraint
seems easier to achieve than in the case of medium probability levels.
As far as optimal control for the no-failure scenario is concerned,
delaying the decrease in engine speed more and more as~$p$ increases
(until the final instant~$\tf$ is reached) is clearly highlighted.

So it seems that the overall algorithm works reasonably well, but
several improvements would still be desirable. These include the
fact that a few tens of thousands of iterations at the start of the
algorithm do not seem to provide much improvement, and it would be
worth digging a little deeper to see if they could be dispensed with.
If we add to this the fact that the computation time of an iteration
is a little over~$2$ seconds for solving both the inner problem
and the projection problem, and therefore that an execution of the
algorithm requires a CPU time of the order of two and a half days,
we understand that there is also a stake in finding a more efficient
method of solving, for example shooting methods \cite{Bonnans_IFAC_2013}.

\subsubsection{Numerical tests with an extreme probability level~$p$
\label{sssec:extreme}}

Finally, we are interested in ``extreme'' probability levels~$p$,
that is, allowing to reach the target for~$99.5\%$ and~$99.9\%$ of
failures. The results are presented at Figure~\ref{fig:probasextremes}.

\begin{figure}[hbtp]
\begin{center}
  \includegraphics[scale=0.5]{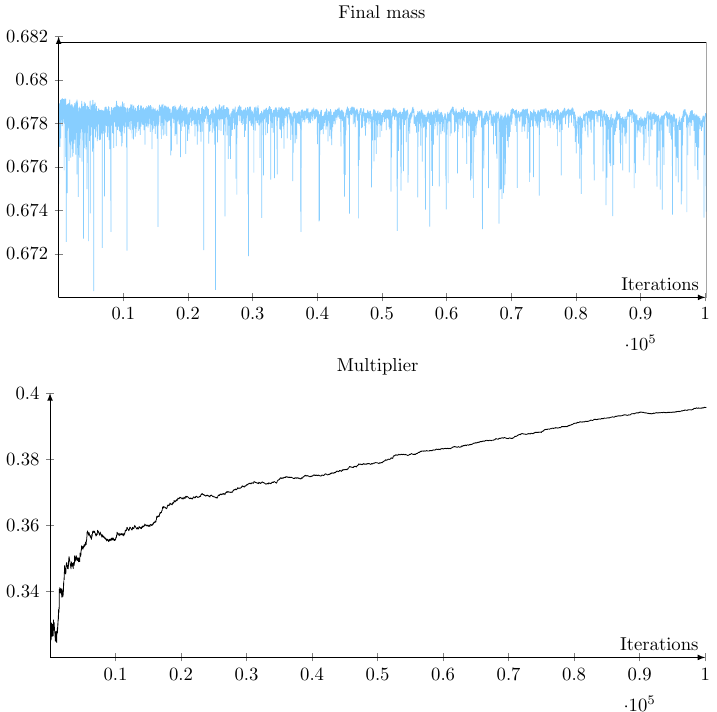}
  \includegraphics[scale=0.5]{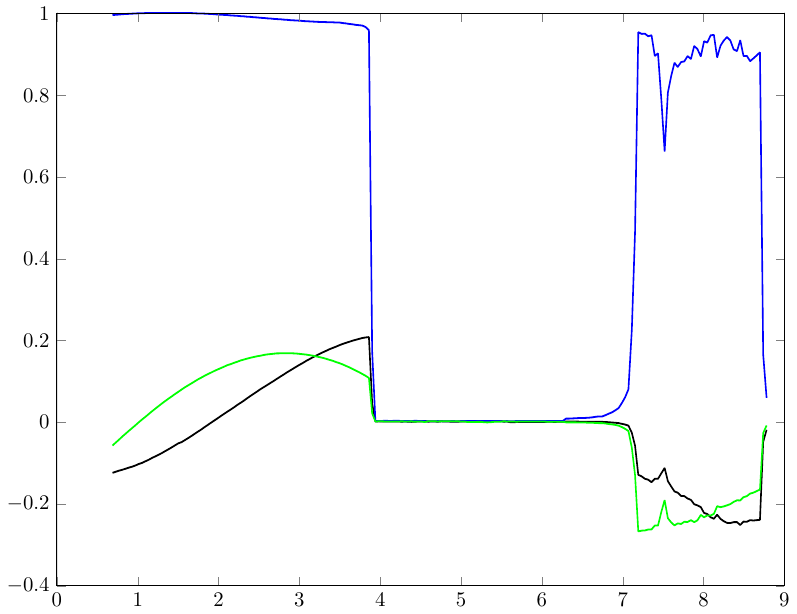}

  \includegraphics[scale=0.5]{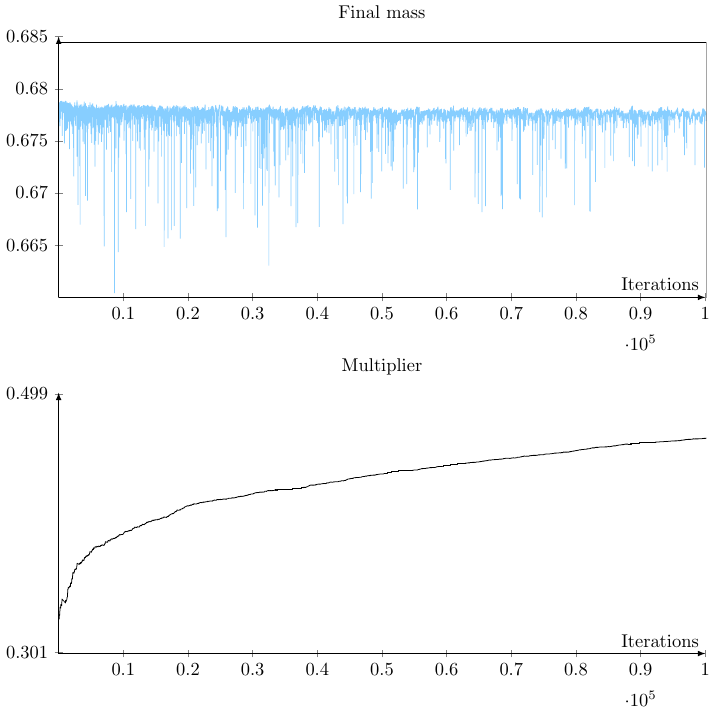}
  \includegraphics[scale=0.5]{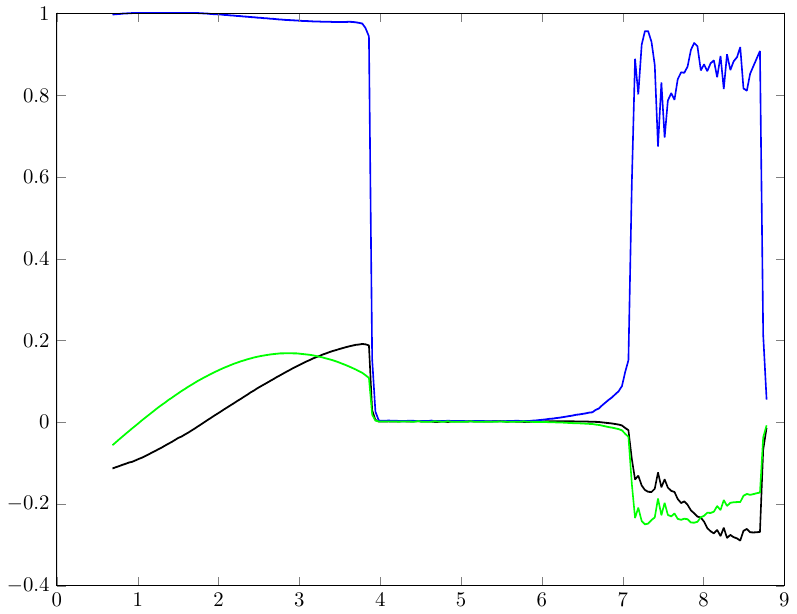}
\end{center}
\caption{Results for~$p=0.995$ (top) and~$p=0.999$ (bottom)}
\label{fig:probasextremes}
\end{figure}

The results show that the dual variable~$\mu$ has not converged and
continues to grow after~$100,000$ iterations. This is confirmed by
the fact that the real probability of recovering a failure at the end
of each run of the algorithm (respectively~$99.3\%$ and~$99.6\%$) are
a little lower than the probability levels requested.
In the case of~$p=0.999$, we might even ask whether such a probability
is feasible. In fact, it is not yet possible to determine the probability
level above which the problem can no longer be solved. If we look at
the shape of the optimal control along the no-failure scenario,
we see the following new phenomenon: the restart of the engine
after period~(b) occurs earlier than in the case of high
probability level in~\S\ref{sssec:high}, this renewed activity
before~$t_{\mathrm{b}}$ being compensated by a reduction of the
engine speed a little after~$t_{\mathrm{b}}$
(as before, there is a sharp speed reduction just before the final
instant~$\tf$). This is most probably how the algorithm tries to achieve
the required probability level, but this is still to be confirmed.

\subsection{Result summary\label{ssec:result-summary}}

To conclude, as a function of the requested level of probability~$p$,
we present two graphics showing the variation
of the optimal multiplier associated with the constraint
on the one hand, and the variation of the optimal consumption
of the no-failure trajectory on the other hand (each point on
these curves corresponds to a run of the stochastic Arrow-Hurwicz
algorithm).

The first curve (Figure~\ref{fig:multiplicateur-proba}) illustrates
the consistency of the results obtained. We can see that the multiplier
(which corresponds to the sensitivity of the optimal cost to the probability
level~$p$) increases with~$p$, with a break in the slope at a probability
level of around~$0.960$.

\begin{figure}[hbtp]
  \begin{center}
    \includegraphics[width=0.5\textwidth]{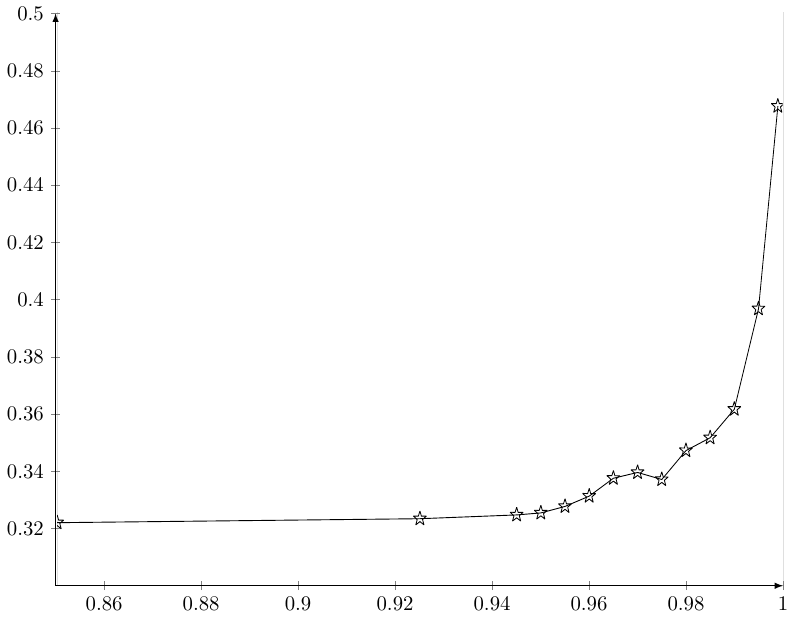}
  \end{center}
  \caption{Multiplier~$\mu$ as a function of the probability level~$p$}
  \label{fig:multiplicateur-proba}
\end{figure}

The second curve (Figure~\ref{fig:consommation-proba}) has a more direct
interpretation, since it indicates the amount of fuel needed to reach the end of
the no-failure scenario, according to the requested level of probability of
success of the mission. Once again, we see a break at around~$p=0.960$, as
consumption starts to increase much more rapidly after this value.  We can
therefore deduce that there is a probability threshold above which security
``comes at a high price''. Note that the fuel consumption for the no-failure
scenario shown in Figure~\ref{fig:consommation-proba} does not correspond to the
amount of fuel you would need to carry to complete the mission with a given
level of probability.  Indeed, this amount of fuel doesn't take into account the
recourse control consumption, which could be numerically estimated by Monte
Carlo approach. This calculation was not carried out in this study.

\begin{figure}[hbtp]
\begin{center}
  \includegraphics[width=0.5\textwidth]{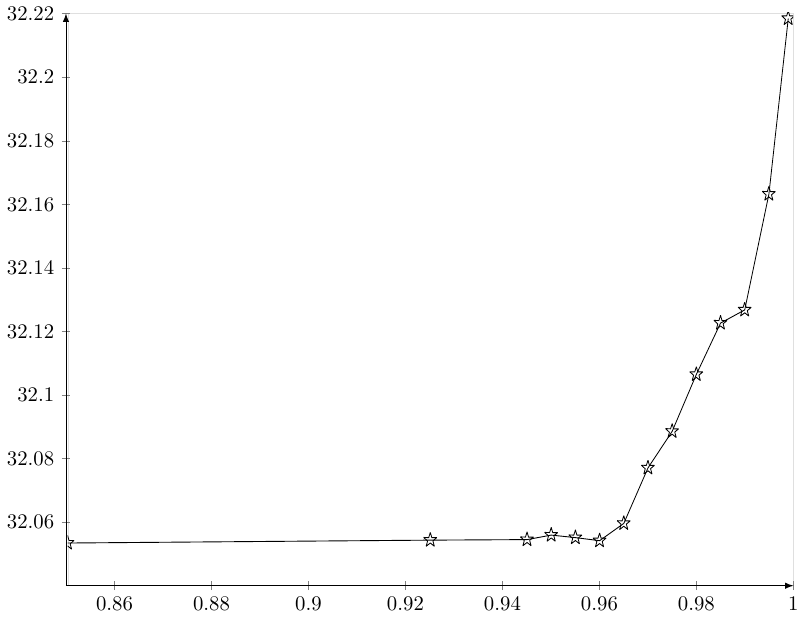}
\end{center}
\caption{Fuel consumption (multiplied by $10^2$) as a function of the probability level~$p$}
\label{fig:consommation-proba}
\end{figure}

\section{Conclusions and perspectives\label{sect:conclusion}}

In this study dedicated to the design of ``robust'' interplanetary
trajectories, we believe that we have shown that the method that
consists in solving the problem of minimizing fuel consumption
in expectation under the probability constraint of reaching the final
target by the stochastic gradient method works effectively.
This involved overcoming a number of difficulties, such as choosing
a suitable problem parametrization, dealing with the final target
constraint of the no-failure scenario by projection, among others.
The results are sufficiently clear, at least for probability levels~$p$
up to~$0.990$, to highlight the modifications of the optimal control
as a function of~$p$. Moreover, the convergence of the stochastic algorithm,
although slow (but this is largely due to the way in which the coefficient
for smoothing the constraint in probability must decrease), is reasonable
in the majority of cases studied. However, much remains to be done,
both theoretically and numerically.
\begin{itemize}
\item As far as theory is concerned, and as suggested
in Appendices~\ref{sec:analogy} and~\ref{sec:gradient},
we need to understand how
the inner problem can be formulated with the Heaviside function~$Y$
(for which a theory of smoothing exists: see~\cite{Andrieu_EJOR_2011}).
This question is all the more important in practical terms as
the smoothing of the function~$Y$ can be performed symmetrically
with respect to the point of discontinuity, and this leads to
an optimal rule of decreasing smoothing coefficient in~$k^{-1/5}$ and
decreasing mean square error in~$k^{-4/5}$ (division by 10,000
in 100,000 iterations. In the case of non-symmetrical smoothing,
we move to an optimal rule in~$k^{-1/3}$ and a mean-square error
decreasing in~$k^{-2/3}$ (division by about 2,000 in 100,000 iterations).
By Appendix~\ref{sec:analogy}, we are convinced that
direct smoothing of~$\fctI$ is equivalent to a non-symmetrical
smoothing of~$Y$, that is, not the best rule to decrease the mean
square error.
\item From the numerical point of view, the overall calculation time
is enormous, making the operational use of the program still too cumbersome.
Part of the explanation lies in the number of iterations required, as
explained above. The other part of the explanation comes from the cost
of solving a single iteration, which requires the complete resolution
of two deterministic optimal control problems. Any progress on either
of these two operations, and particularly the first, would have
a beneficial effect on this overall computation time.
\end{itemize}

\appendix

\section{Satellite model details\label{sec:satellite-details}}

\subsection{Change of coordinates}

We recall here the simplified dynamic model of a satellite given in Equation~\ref{eq:dynamics}
\begin{align*}
  \frac{\mathrm{d}r}{\mathrm{d}t} = v
  \eqsepv \quad
  \frac{\mathrm{d}v}{\mathrm{d}t} = - \paramu \frac{r}{\norm{r}^{3}} +
  \frac{T}{m}\kappa
  \eqsepv
  \frac{\mathrm{d}m}{\mathrm{d}t} = - \frac{T}{g_{0}I_{\mathrm{sp}}} \delta
  \eqfinv
\end{align*}
and rewrite this state dynamics using the equinoctial
coordinates~$(p,e_{x},e_{y},h_{x},h_{y},l)$, for the position and velocity
of the satellite. The equinoctial coordinates are obtained from the Keplerian
coordinates\footnote{semi-major axis, eccentricity, inclination, argument
of periapsis, longitude of the ascending node, and true anomaly}
$(a,e,i,\omega,\Omega,\nu)$ through the transformation:
\begin{align*}
  p     & = a | 1-e^{2}| \; , \\
  e_{x} & = e \cos(\omega+\Omega) \; , \\
  e_{y} & = e \sin(\omega+\Omega) \; , \\
  h_{x} & = \tan (i/2) \cos(\Omega) \; , \\
  h_{y} & = \tan (i/2) \sin(\Omega) \; , \\
  l     & = \omega + \Omega + \nu \; ,
\end{align*}
and by performing the following change of variables for the control
inputs:\footnote{where~$\alpha$ is the azimuth measured relative to~$q$
and~$\beta$, the elevation of the engine thrust in the satellite's
local frame of reference}
\begin{align*}
  q & = \delta \cos(\beta) \cos(\alpha) \; , \\
  s & = \delta \cos(\beta) \sin(\alpha) \; , \\
  w & = \delta \sin(\beta) \; ,
\end{align*}
where~$q$, $s$ and~$w$ are the radial, tangential, and normal components
of the control input, respectively.
In this coordinate system, Gauss' equations are written as:
\begin{subequations}
  \label{eq:dynamique}
  \begin{align}
    \frac{\mathrm{d}p}{\mathrm{d}t}
    & = 2 \: \sqrt{\frac{p^{3}}{\paramu}} \: \frac{1}{Z} \:
      \frac{T}{m} \: s \; , \\
    \frac{\mathrm{d}e_{x}}{\mathrm{d}t}
    & = \sqrt{\frac{p}{\paramu}} \: \frac{1}{Z} \:
      \frac{T}{m} \big( Z\:\sin(l)\:q + A\:s - e_{y}\:F\:w \big) \; , \\
    \frac{\mathrm{d}e_{y}}{\mathrm{d}t}
    & = \sqrt{\frac{p}{\paramu}} \: \frac{1}{Z} \:
      \frac{T}{m} \big( -Z\:\cos(l)\:q + B\:s + e_{x}\:F\:w \big) \; , \\
    \frac{\mathrm{d}h_{x}}{\mathrm{d}t}
    & = \frac{1}{2} \: \sqrt{\frac{p}{\paramu}} \: \frac{X}{Z} \: \cos(l) \:
      \frac{T}{m} \: w \; , \\
    \frac{\mathrm{d}h_{y}}{\mathrm{d}t}
    & = \frac{1}{2} \: \sqrt{\frac{p}{\paramu}} \: \frac{X}{Z} \: \sin(l) \:
      \frac{T}{m} \: w \; , \\
    \frac{\mathrm{d}l}{\mathrm{d}t}
    & = \sqrt{\frac{\paramu}{p^{3}}} \: Z^{2} +
      \sqrt{\frac{p}{\paramu}} \: \frac{1}{Z} \: F \: \frac{T}{m} \: w \; , \\
    \frac{\mathrm{d}m}{\mathrm{d}t}
    & = -\frac{T}{g_{0}I_{\mathrm{sp}}} \: \sqrt{q^{2}+s^{2}+w^{2}}\; ,
  \end{align}
\end{subequations}
with~:
\begin{align*}
  Z & = 1 + e_{x}\cos(l) + e_{y}\sin(l) \; , \\
  A & = e_{x} + (1+Z)\cos(l) \; , \\
  B & = e_{y} + (1+Z)\sin(l) \; , \\
  F & = h_{x}\sin(l) - h_{y}\cos(l) \; , \\
  X & = 1 + h_{x}^{2} + h_{y}^{2} \; .
\end{align*}
To apply variational techniques, we assume that the engine on/off
indicator can vary continuously between the values~$0$ (off) and~$1$ (on),
so that the constraint on the control input is written as:
\begin{equation}
  \label{eq:commande}
  q^{2}+s^{2}+w^{2} \leq 1 \; .
\end{equation}

Finally, the initial time~$\ti$ is assumed to be fixed, and the state
at the initial time is assumed to be known giving $x(\ti) = x_{\mathrm{i}}$.
Thus, with~$x=(p,e_{x},e_{y},h_{x},h_{y},l)^{\top}$ and~$u=(q,s,w)^{\top}$,
the equinoctial state dynamics~\eqref{eq:dynamique} can be summarized as
\begin{equation}
  x(\ti) = x_{\mathrm{i}} \eqsepv
  \frac{\mathrm{d}x}{\mathrm{d}t}(t) = f\bp{x(t),u(t)}
  \eqfinv
\end{equation}
which gives Equation~\eqref{eq:equinoxial}.

\subsection{Deterministic optimization problem}

We recall that the final time~$\tf$ is also fixed. The optimization
problem we aim to solve consists of enforcing a rendez-vous constraint
in position/velocity (which thus affects the first six components
of the state), with the objective of maximizing the mass
at the final time~$\tf$.

Let~$\lambda=(\lambda_{p},\lambda_{e_{x}},\lambda_{e_{y}}, \lambda_{h_{x}},\lambda_{h_{y}},\lambda_{l}, \lambda_{m})^{\top}$ be
the adjoint state associated with the dynamics~\eqref{eq:dynamique},
the Hamiltonian of the problem is expressed as (with no integral term
in the cost):
\begin{equation}
  \label{eq:hamiltonien}
  H(x,u,\lambda) = \lambda^{\top} f(x,u) \; ,
\end{equation}
The dynamics of the adjoint state~$\lambda$ are given by:\footnote{The
explicit form of these equations is relatively complicated, and therefore
it will not be provided in this note. The reader is referred
to~\cite{Dargent_Report_2004} for the complete expression}
\begin{equation}
  \label{eq:etatadjoint}
  \frac{\mathrm{d}\lambda}{\mathrm{d}t} = -
  \Big( \frac{\partial f}{\partial x}(x,u)\Big)^{\top} \lambda \; .
\end{equation}
The gradients of the Hamiltonian with respect to the control
inputs are written as:
\begin{equation}
  \label{eq:gradient-u}
  \Big( \frac{\partial f}{\partial u}(x,u)\Big)^{\top} \lambda \; ,
\end{equation}
with~:
\begin{equation*}
  \frac{\partial f}{\partial u}(x,u) =
  \left(
    \begin{array}{ccc}
      0                 & 2 K_{a} p & 0                           \\
      K_{a} Z \sin(l) & K_{a} A      & - K_{a} e_{y} F             \\
      - K_{a} Z \cos(l) & K_{a} B      &   K_{a} e_{x} F             \\
      0                 & 0            & \frac{1}{2} K_{a} X \cos(l) \\
      0                 & 0            & \frac{1}{2} K_{a} X \sin(l) \\
      0                 & 0            & K_{a} F                     \\
      \gamma q_{n}      &\gamma s_{n}  &\gamma w_{n}
    \end{array}
  \right)
  \eqfinv
\end{equation*}
where~$q_{n}$, $s_{n}$ and~$w_{n}$ are the normalized control inputs,
that is~$q_{n}=\frac{q}{\sqrt{q^{2}+s^{2}+w^{2}}}$ , \ldots, and where
the two variables~$\gamma$ and~$K_{a}$ are defined as:
\begin{align*}
  \gamma & = - \frac{T}{g_{0}I_{\mathrm{sp}}} \; , \\
  K_{a}  & = \sqrt{\frac{p}{\paramu}} \frac{T}{m} \frac{1}{Z} \; .
\end{align*}
The final cost to minimize consists, on one hand, of the term corresponding
to the negative of the mass at the final time, and on the other hand,
of a term corresponding to the dualization of the final target
constraints\footnote{Recall that our goal is to use a stochastic algorithm
to solve the optimization problem under probabilistic constraints.
Therefore, we are compelled to dualize the final target constraints}
using an augmented Lagrangian with a regularization parameter~$c$,
the multiplier being denoted as~$\mu^{\mathrm{d}}$, thus in total:
\begin{equation}
  \label{eq:cout-final}
  x_{7}(\ti)-x_{7}(\tf) \; + \; \sum_{j=1}^{6}
  \mu^{\mathrm{d}}_{j} \big( x_{j}(\tf)-x_{\mathrm{f},j} \big) \; + \;
  \frac{c}{2} \big( x_{j}(f)-x_{\mathrm{f},j} \big)^{2} \; .
\end{equation}
The transversality conditions then provide the value of the adjoint
state at the final time:
\begin{equation}
  \label{eq:transver-m}
  \lambda(\tf) =
  \left(
    \begin{array}{c}
      \mu^{\mathrm{d}}_{1}+c\big(x_{1}(\tf)-x_{\mathrm{f},1}\big) \\
      \mu^{\mathrm{d}}_{2}+c\big(x_{2}(\tf)-x_{\mathrm{f},2}\big) \\
      \mu^{\mathrm{d}}_{3}+c\big(x_{3}(\tf)-x_{\mathrm{f},3}\big) \\
      \mu^{\mathrm{d}}_{4}+c\big(x_{4}(\tf)-x_{\mathrm{f},4}\big) \\
      \mu^{\mathrm{d}}_{5}+c\big(x_{5}(\tf)-x_{\mathrm{f},5}\big) \\
      \mu^{\mathrm{d}}_{6}+c\big(x_{6}(\tf)-x_{\mathrm{f},6}\big) \\
      -1
    \end{array}
  \right)
  \eqfinp
\end{equation}

The deterministic optimization problem is solved by the deterministic
Arrow-Hurwicz algorithm. An iteration of this algorithm consists of:
\begin{itemize}
\item Integrating the state dynamics~\eqref{eq:dynamique} over the
  interval~$[\ti,\tf]$ using the current control input~$u$.
\item Integrating the adjoint state dynamics~\eqref{eq:etatadjoint}
  from the conditions~\eqref{eq:transver-m} backward in time.
\item Updating the control input trajectories~$u$ using the
  gradients~\eqref{eq:gradient-u}, with projection onto the set defined
  by~\eqref{eq:commande}.
\item Performing a gradient step on the dual variables~$(\mu^{\mathrm{d}}_{1},\ldots,\mu^{\mathrm{d}}_{6})$.
\end{itemize}

\section{Transformation lemma\label{sec:form-lemma}}

Consider the following optimization problem:
\begin{equation}
  \label{pb:compacti}
  \min_{\mathbf{u} \in U\ad \subset\mathcal{U}} \;\;
  \frac{J(\mathbf{u})}{\Theta(\mathbf{u})} \quad
  \text{s.t.}\quad \Theta(\mathbf{u})\geq p \eqsepv
\end{equation}
in which $J$ and $\Theta$ assume positive values.
Problem~\eqref{pb:compacti} displays a very special structure,
in the sense that the denominator in the ratio defining the cost
function is identical to the left-hand side of the constraint.

The following lemma provides conditions under which
Problem~\eqref{pb:compacti} and the optimization problem
\begin{equation}
  \label{pb:compact}
  \min_{\mathbf{u} \in U\ad \subset\mathcal{U}} \;\;
  J(\mathbf{u}) \quad \text{s.t.} \quad \Theta(\mathbf{u})\geq p \eqfinv
\end{equation}
have the same solutions.
\medskip

\begin{lemma}
  Let~$U\ad$ be a closed convex subset of the Hilbert space~$\mathcal{U}$.
  Let~$J$ and~$\Theta$ be real-valued differentiable functions defined
  on~$\mathcal{U}$. We moreover assume that~$J$ is nonnegative
  and that~$\Theta$ is positive.
  \begin{itemize}
  \item Let~$\mathbf{u}\opt$ be a solution of~\eqref{pb:compacti}.
    If~$\mathbf{u}\opt$ satisfies the condition
    \begin{equation*}
      \Theta(\mathbf{u}\opt)=p \eqsepv
    \end{equation*}
    then $\mathbf{u}\opt$ is also a solution of~\eqref{pb:compact}.
  \item Conversely, suppose that $\mathbf{u}\opt$ satisfies
    the Karush-Kuhn-Tucker conditions of Problem~\eqref{pb:compact},
    and let~$\mu\opt$ be the associated multiplier.
    If~$\mu\opt$ is such that
    \begin{equation*}
      \mu\opt\geq \frac{J(\mathbf{u}\opt)}{\Theta(\mathbf{u}\opt)} \eqfinv
    \end{equation*}
    then $\mathbf{u}\opt$ also satisfies the Karush-Kuhn-Tucker conditions
    of Problem~\eqref{pb:compacti}.
  \end{itemize}
  \label{lem:useful}
\end{lemma}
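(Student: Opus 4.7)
My approach splits along the two bullets of the lemma. For the first claim, the plan is a short direct argument: pick any admissible $\mathbf{u} \in U\ad$ (so that $\Theta(\mathbf{u}) \geq p > 0$) and exploit the optimality of $\mathbf{u}\opt$ in~\eqref{pb:compacti} together with the hypothesis $\Theta(\mathbf{u}\opt) = p$ via the chain
\[
\frac{J(\mathbf{u}\opt)}{p} \;=\; \frac{J(\mathbf{u}\opt)}{\Theta(\mathbf{u}\opt)} \;\leq\; \frac{J(\mathbf{u})}{\Theta(\mathbf{u})} \;\leq\; \frac{J(\mathbf{u})}{p} \eqfinv
\]
where the last inequality uses $J(\mathbf{u}) \geq 0$ and $\Theta(\mathbf{u}) \geq p$. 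Multiplying through by $p > 0$ delivers $J(\mathbf{u}\opt) \leq J(\mathbf{u})$, which is the sought optimality for~\eqref{pb:compact}.

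For the second claim, the plan is to write both KKT systems at $\mathbf{u}\opt$ and exhibit a one-to-one change of multiplier. Denoting by $N(\mathbf{u}\opt)$ the normal cone to $U\ad$ at $\mathbf{u}\opt$, the KKT stationarity for~\eqref{pb:compact} with multiplier $\mu\opt \geq 0$ reads
\[
-\nabla J(\mathbf{u}\opt) + \mu\opt \nabla \Theta(\mathbf{u}\opt) \in N(\mathbf{u}\opt) \eqfinv
\]
whereas the stationarity for~\eqref{pb:compacti} with a multiplier $\tilde{\mu} \geq 0$, after expanding $\nabla(J/\Theta)$ and multiplying the resulting inclusion by $\Theta(\mathbf{u}\opt) > 0$ (which leaves $N(\mathbf{u}\opt)$ invariant, since it is a cone), becomes
\[
-\nabla J(\mathbf{u}\opt) + \left(\frac{J(\mathbf{u}\opt)}{\Theta(\mathbf{u}\opt)} + \tilde{\mu}\,\Theta(\mathbf{u}\opt)\right) \nabla \Theta(\mathbf{u}\opt) \in N(\mathbf{u}\opt) \eqfinp
\]
Matching the two inclusions suggests the change of variable
\[
\tilde{\mu} \;=\; \frac{\mu\opt - J(\mathbf{u}\opt)/\Theta(\mathbf{u}\opt)}{\Theta(\mathbf{u}\opt)} \eqfinp
\]

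What remains is to check that this $\tilde{\mu}$ satisfies dual feasibility and complementary slackness for~\eqref{pb:compacti}. Dual feasibility $\tilde{\mu} \geq 0$ is exactly the hypothesis $\mu\opt \geq J(\mathbf{u}\opt)/\Theta(\mathbf{u}\opt)$. The complementary slackness requirement $\tilde{\mu}(\Theta(\mathbf{u}\opt)-p)=0$ is the step I expect to be the main (though mild) obstacle, and a case split settles it: if $\mu\opt > 0$, then complementary slackness for~\eqref{pb:compact} forces $\Theta(\mathbf{u}\opt) = p$; if $\mu\opt = 0$, the standing hypothesis together with $J \geq 0$ forces $J(\mathbf{u}\opt) = 0$, whence $\tilde{\mu} = 0$ and complementarity holds trivially. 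Beyond this bookkeeping I do not anticipate any deeper difficulty, since the positivity of $\Theta$ rules out division by zero and the normal cone rescales cleanly under multiplication by a positive scalar.
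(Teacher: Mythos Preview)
Your proof is correct and follows essentially the same route as the paper: the first bullet is handled by the identical chain of inequalities, and the second by the same change of multiplier $\tilde{\mu}=(\mu\opt-J(\mathbf{u}\opt)/\Theta(\mathbf{u}\opt))/\Theta(\mathbf{u}\opt)$ (the paper's~$\lambda\opt$), with the only cosmetic difference that you verify complementary slackness by a case split on $\mu\opt$ whereas the paper argues that both terms in $\mu\opt\bp{\Theta(\mathbf{u}\opt)-p}=\frac{J(\mathbf{u}\opt)}{\Theta(\mathbf{u}\opt)}\bp{\Theta(\mathbf{u}\opt)-p}+\tilde{\mu}\,\Theta(\mathbf{u}\opt)\bp{\Theta(\mathbf{u}\opt)-p}$ are nonnegative and hence vanish.
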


\begin{proof}
  Assume that~$\mathbf{u}\opt$ is a solution of~\eqref{pb:compacti}
  such that~$\Theta(\mathbf{u}\opt)=p$. Then
  \begin{equation*}
    \frac{J(\mathbf{u}\opt)}{\Theta(\mathbf{u}\opt)} \leq
    \frac{J(\mathbf{u})}{\Theta(\mathbf{u})} \quad
    \forall \mathbf{u} \in U\ad \cap
    \{\mathbf{u}\in\mathcal{U} , \; \Theta(\mathbf{u}) \geq p\} \eqfinp
  \end{equation*}
  The left-hand side of the last inequality is equal to~$J(\mathbf{u}\opt)/p$,
  whereas the right-hand side is less than or equal to~$J(\mathbf{u})/p$.
  Hence~$\mathbf{u}\opt$ is a solution of Problem~\eqref{pb:compact}.

  Conversely, let~$(\mathbf{u},\opt\mu\opt)$ be a pair satisfying
  the Karush-Kuhn-Tucker conditions of~\eqref{pb:compact}:\footnote{We
    assume here that some constraint qualification condition holds,
    so that these conditions are necessary optimality conditions for
    Problem~\eqref{pb:compact}.}
  \begin{align*}
    & \nabla J(\mathbf{u}\opt) -\mu\opt \nabla \Theta(\mathbf{u}\opt)
      \in - N_{U\ad}(\mathbf{u}\opt) \eqsepv \\
    & \mu\opt \geq 0 ,\; \Theta(\mathbf{u}\opt) \geq p ,\;
      \mu\opt\big(\Theta(\mathbf{u}\opt)-p\big) = 0 \eqsepv
  \end{align*}
  where~$N_{U\ad}(\mathbf{u}\opt)$ is the normal cone to~$U\ad$
  at~$\mathbf{u}\opt$. Let~$\lambda\opt$ be defined by:
  \begin{equation*}
    \mu\opt = \frac{J(\mathbf{u}\opt)}{\Theta(\mathbf{u}\opt)} +
    \lambda\opt \Theta(\mathbf{u}\opt) \eqfinp
  \end{equation*}
  From the assumptions on~$\mu\opt$ and~$\Theta$, we deduce
  that~$\lambda\opt \geq 0$. Moreover, the condition
  $\mu\opt\big(\Theta(\mathbf{u}\opt)-p\big)=0$ implies:
  \begin{equation*}
    \frac{J(\mathbf{u}\opt)}{\Theta(\mathbf{u}\opt)}
    \big(\Theta(\mathbf{u}\opt)-p\big) +
    \lambda\opt \Theta(\mathbf{u}\opt)
    \big(\Theta(\mathbf{u}\opt)-p\big) =0 \eqsepv
  \end{equation*}
  and thus~$\lambda\opt\big(\Theta(\mathbf{u}\opt)-p\big)=0$ because
  both terms in the last expression are nonnegative. Finally,
  substituting~$\mu\opt$ by its expression function of~$\lambda\opt$
  in the first Karush-Kuhn-Tucker condition of Problem~\eqref{pb:compact}
  leads to:
  \begin{equation*}
    \nabla J(\mathbf{u}\opt) -
    \frac{J(\mathbf{u}\opt)}{\Theta(\mathbf{u}\opt)}\nabla\Theta(\mathbf{u}\opt) -
    \lambda\opt\Theta(\mathbf{u}\opt)\nabla\Theta(\mathbf{u}\opt)
    \in - N_{U\ad}(\mathbf{u}\opt) \eqfinp
  \end{equation*}
  We thus conclude that~$(\mathbf{u}\opt,\lambda\opt)$ satisfies
  the Karush-Kuhn-Tucker conditions of Problem~\eqref{pb:compacti}.
\end{proof}

\section{Choosing the smoothing parameter $r_k$}
\label{sec:analogy}

In~\cite{Andrieu_EJOR_2011} and~\cite{Andrieu_arXiv_2007}), a general problem of
minimizing a criterion in expectation under a probability constraint is studied.
This probability constraint, as in our study, is written as a constraint in
expectation involving the (discontinuous) Heaviside function~$Y$:
\begin{equation*}
  Y(y) =
  \begin{cases}
    0 & \text{if } y < 0 \eqfinv \\
    1 & \text{otherwise} \eqfinp
  \end{cases}
\end{equation*}
In order to numerically deal with this constraint in expectation, the
function~$Y$ is regularized by convolution (see~\cite{Ermoliev_SIAM_1995})
with a smoothing parameter~$r$. If the regularized function $Y_r$
is chosen symmetrical ($Y_r(- x )=Y_r(x)$), it is shown in~\cite[Theorem~3]{Andrieu_EJOR_2011} that minimizing the mean square
error of the estimate of the derivative of the smoothed constraint
leads to a parameter~$r$ proportional to~$k^{-1/5}$ when the estimation
is based on~$k$ samples. Choosing a non symmetrical regularized
function $Y_r$ such as
\begin{equation}
  \label{eq:Y-lissage}
  Y_{r}(y) =
  \begin{cases}
    0             & \text{if } y \leq -r \eqfinv \\
    1+\frac{y}{r} & \text{if } y \in ]- r,0[ \eqfinv \\
    1             & \text{otherwise} \eqfinv
  \end{cases}
\end{equation}
is possible and leads to choose a smoothing parameter~$r$
proportional to~$k^{-1/3}$.


In our study, the function involved in the expected constraint is the indicator
function~$\fctI$ as defined in Equation~\eqref{eq:indicator-function}, instead
of the Heaviside function~$Y$ in~\cite{Andrieu_EJOR_2011}. As given
in Equation~\eqref{eq:smooth-indicator-function}, the indicator function~$\fctI$
is regularized as
\begin{equation}
  \label{eq:I-lissage}
  \fctI_{r}(y)= \max\Ba{0,1-\frac{y}{r}} \eqsepv y \geq 0 \eqfinp
\end{equation}
The regularized indicator function $\fctI_{r}$ enters the inner optimization
Problem~\eqref{eq:inner}.
%
We show that this regularized problem involving function $\fctI_{r}$ in~\eqref{eq:I-lissage}
is equivalent to an other optimization problem involving function $Y_r$ in~\eqref{eq:Y-lissage}.
For that purpose, consider a failure $\xi=(\tpx,\tdx)$ and the satellite state $x(\tpx)$ at the
beginning of the failure. We define $G$ as a mapping which is non-negative when
reaching the target after the failure is possible and negative otherwise. Thus,
the space is divided into two parts according to the sign of the function~$G$,
the first part corresponding to the points from which the target can be reached.
Now, the condition~$C\bp{\xx(\tfx)}=0$ is equivalent to the
condition~$G\bp{x(\tpx),\tpx,\tdx} \geq 0$, that is,
\begin{equation}
  \label{eq:passage-IY}
  \fctI\Bp{\bnorm{C\bp{\xx(\tfx)}}} =
  Y\Bp{G\bp{x(\tpx),\tpx,\tdx}} \eqfinv
\end{equation}
and therefore, the analog of the smoothing function~\eqref{eq:I-lissage}
of~$\fctI$ for function~$Y$ is~\eqref{eq:Y-lissage}.  As already discussed, the
non symmetrical smoothing~$Y_r$ leads to a smoothing coefficient $r_k$
proportional to~$k^{-1/3}$.  Thus, we also choose a smoothing coefficient $r_k$
proportional to~$k^{-1/3}$ for $\fctI_{r}$.

\section{Remarks on the inner problem and its gradients}
\label{sec:gradient}

Here again, we use the function~$G$ introduced in Appendix~\ref{sec:analogy}.
Using~\eqref{eq:passage-IY}, the inner optimization problem~\eqref{eq:inner} becomes:
\begin{equation}
 \label{eq:inner-Y}
 W_{r}\bp{x(\tpx),\tpx,\tdx,\mu} =
 \min_{v}
 \bp{K\bp{\Xtf}-\mu} Y_{r}\Bp{G\bp{x(\tpx),\tpx,\tdx}}
 \eqfinp
\end{equation}
We are interested in the value and in the different partial derivatives of~$W_{r}$ (with respect the control~$v$, to the initial state~$x$ and
to the multiplier~$\mu$) as they appear in the stochastic Arrow-Hurwicz
algorithm in~\S\ref{ssect:stochastic-arrow}. There are several cases
to consider.
\begin{enumerate}
 \item $Y_{r}\bp{G\np{x(\tpx),\tpx,\tdx}} = 0$, that is,
 we miss the target by at least~$r$: then~$Y_{r}$ and its gradient
 are equal to zero, and so are~$W_{r}\bp{x(\tpx),\tpx,\tdx,\mu}$
 and its partial derivatives.
 \item $Y_{r}\bp{G\np{x(\tpx),\tpx,\tdx}} =1 + \theta/r$, that is,
 we approach the target at less than~$r$: then the partial derivatives
 are of order~$1/r$ since the derivative of~$Y_{r}$ is of order~$1/r$.
 \item $Y_{r}\bp{G\np{x(\tpx),\tpx,\tdx}} = 1$, that is,
 we hit the target exactly: then the gradient of~$Y_{r}$ is equal
 to zero, so that the partial derivatives of~$W_{r}$ only come
 from the final cost~$K$ and do not depend on~$r$.
\end{enumerate}

As a consequence, when one is at a distance less than~$r$ from
the target without being exactly there, the gradient of~$Y_{r}$
with respect to the control is of order~$1/r$, so that the inner
optimization problem will try of getting as close possible
of the target.

Another conclusion is that, in order to properly evaluate the gradients
of the function~$W_{r}$ with respect to the state~$x$ and the
multiplier~$\mu$ as required in the stochastic Arrow-Hurwicz algorithm
in~\S\ref{ssect:stochastic-arrow}, it is necessary to distinguish
the case when the target is hit exactly from the case where
the target is approached at less than~$r$.


\begin{thebibliography}{10}

\bibitem{Andrieu_arXiv_2007}
L.~Andrieu, G.~Cohen, and F.~J. Vazquez-Abad.
\newblock Stochastic programming with probability constraints.
\newblock {\em arXiv preprint}, arXiv:0708.0281, 2007.

\bibitem{Andrieu_EJOR_2011}
L.~Andrieu, G.~Cohen, and F.~J. Vazquez-Abad.
\newblock Gradient-based simulation optimization under probability constraints.
\newblock {\em European Journal of Operational Research}, 212(2):345--351,
  2011.

\bibitem{Arrow_SUP_1958}
K.~J. Arrow, L.~Hurwicz, H.~Uzawa, H.~B. Chenery, S.~Johnson, and S.~Karlin.
\newblock {\em Studies in linear and non-linear programming}, volume~2.
\newblock Stanford University Press, 1958.

\bibitem{Bertsimas_SIAM_2011}
D.~Bertsimas, D.~B. Brown, and C.~Caramanis.
\newblock Theory and applications of robust optimization.
\newblock {\em SIAM Review}, 53:464--501, 2011.

\bibitem{Bonnans_IFAC_2013}
J.~F. Bonnans.
\newblock The shooting approach to optimal control problems.
\newblock {\em IFAC Proceedings Volumes}, 46(11):281--292, 2013.

\bibitem{Carpentier_Springer_2017}
P.~Carpentier and G.~Cohen.
\newblock {\em Decomposition-coordination en optimisation deterministe et
  stochastique}, volume~81 of {\em Mathematiques et Applications}.
\newblock Springer, 2017.

\bibitem{Chai_PAS_2019}
R.~Chai, A.~Savvaris, A.~Tsourdos, S.~Chai, and Y.~Xia.
\newblock A review of optimization techniques in spacecraft flight trajectory
  design.
\newblock {\em Progress in aerospace sciences}, 109:100543, 2019.

\bibitem{Conway_CUP_2010}
B.~A. Conway.
\newblock {\em Spacecraft trajectory optimization}, volume~29.
\newblock Cambridge University Press, 2010.

\bibitem{Culioli_SIAM_1990}
J.~C. Culioli and G.~Cohen.
\newblock Decomposition/coordination algorithms in stochastic optimization.
\newblock {\em SIAM Journal on Control and Optimization}, 28(6):1372--1403,
  1990.

\bibitem{Culioli_RI_1994}
J.-C. Culioli and G.~Cohen.
\newblock Optimisation stochastique sous contraintes en esperance.
\newblock Technical Report A-288, Rapport interne CAS, ENSMP, December 1994.

\bibitem{Culioli_CRAS_1995}
J.-C. Culioli and G.~Cohen.
\newblock Optimisation stochastique sous contraintes en esperance.
\newblock {\em Comptes Rendus de l'Academie des Sciences}, 320(I):735--758,
  1995.

\bibitem{Dargent_Report_2004}
T.~Dargent.
\newblock Transfert d'orbite optimal par application du principe du minimum.
\newblock Logiciel T3D, version 2, 2004.

\bibitem{Ermoliev_SIAM_1995}
Y.~Ermoliev, V.~Norkin, and R.-B. Wets.
\newblock The minimization of semicontinuous functions: Mollifier subgradients.
\newblock {\em SIAM Journal on Control and Optimization}, 33(1):149--167, 1995.

\bibitem{Greco_JGCD_2022}
C.~Greco, S.~Campagnola, and M.~Vasile.
\newblock Robust space trajectory design using belief optimal control.
\newblock {\em Journal of Guidance, Control and Dynamics}, 45(6):1060--1077,
  2022.

\bibitem{Henrion_JOTA_2002}
R.~Henrion.
\newblock On the connectedness of probabilistic constraint sets.
\newblock {\em Journal of Optimization Theory and Applications}, 112:657--663,
  2002.

\bibitem{Henrion_MP_2004}
R.~Henrion and W.~Romisch.
\newblock Holder and lipschitz stability of solution sets in programs with
  probabilistic constraints.
\newblock {\em Mathematical Programming}, 100:589--611, 2004.

\bibitem{Henrion_COA_2008}
R.~Henrion and C.~Strugarek.
\newblock Convexity of chance constraints with independent random variables.
\newblock {\em Computational Optimization and Applications}, 41(2):263--276,
  2008.

\bibitem{Morante_Aerospace_2021}
D.~Morante, M.~Sanjurjo~Rivo, and M.~Soler.
\newblock A survey on low-thrust trajectory optimization approaches.
\newblock {\em Aerospace}, 8(3):88, 2021.

\bibitem{Oguri_AAS_2022}
K.~Oguri and G.~Lantoine.
\newblock Stochastic sequential convex programming for robust low-thrust
  trajectory design under uncertainty.
\newblock In {\em AAS/AIAA Astrodynamics Specialist Conference}, volume~8. AAS
  Charlotte, NC, 2022.

\bibitem{Olympio_AAS_2010}
J.~T. Olympio.
\newblock Designing robust low-thrust interplanetary trajectories subject to
  one temporary engine failure.
\newblock In {\em Proceedings of the 20th AAS/AIAA Space Flight Meeting}, pages
  10--171, 2010.

\bibitem{Prekopa_Kluwer_1995}
A.~Prekopa.
\newblock {\em Stochastic Programming}.
\newblock Kluwer Academic Publishers, Dordrecht, 1995.

\bibitem{Prekopa_Elsevier_2003}
A.~Prekopa.
\newblock Probabilistic programming.
\newblock In {\em Stochastic Programming}, volume~10 of {\em Handbooks in
  Operations Research and Management Science}, pages 267--351. Elsevier, 2003.

\bibitem{Rockafellar_Springer_1998}
R.~T. Rockafellar and R.~J.-B. Wets.
\newblock {\em Variational Analysis}.
\newblock Springer Verlag, Berlin Heidelberg, 1998.

\bibitem{Rubinsztejn_AST_2021}
A.~Rubinsztejn, C.~G. Sandel, R.~Sood, and F.~E. Laipert.
\newblock Designing trajectories resilient to missed thrust events using
  expected thrust fraction.
\newblock {\em Aerospace Science and Technology}, 115:106780, 2021.

\bibitem{Ruszczynski_Elsevier_2003}
A.~Ruszczynski and A.~Shapiro.
\newblock {\em Handbook in Operations Research and Management Science, Vol. 10
  - Stochastic Programming}.
\newblock Elsevier, 2003.

\bibitem{Shapiro_SIAM_2014}
A.~Shapiro, D.~Dentcheva, and A.~Ruszczynski.
\newblock {\em Lectures on Stochastic Programming: Modeling and Theory}.
\newblock SIAM and MPS, Philadelphia, second edition, 2014.

\bibitem{Shimane_ASCEND_2021}
Y.~Shimane and K.~Ho.
\newblock Robustness assessment of low-thrust trajectory via sequentially
  truncated sims-flanagan problems.
\newblock In {\em ASCEND 2021}, page 4153. American Institute of Aeronautics
  and Astronautics, 2021.

\bibitem{Shirazi_PAS_2018}
A.~Shirazi, J.~Ceberio, and J.~A. Lozano.
\newblock Spacecraft trajectory optimization: A review of models, objectives,
  approaches and solutions.
\newblock {\em Progress in Aerospace Sciences}, 102:76--98, 2018.

\bibitem{Sidhoum_AAS_2024}
Y.~Sidhoum and K.~Oguri.
\newblock Robust low-thrust trajectory correction planning under uncertainty:
  Primer vector theory approach.
\newblock In {\em AAS/AIAA Astrodynamics Specialist Conference, Broomfield, CO,
  USA}, 2024.

\bibitem{Venigalla_JGCD_2022}
C.~Venigalla, J.~A. Englander, and D.~J. Scheeres.
\newblock Multi-objective low-thrust trajectory optimization with robustness to
  missed thrust events.
\newblock {\em Journal of Guidance, Control and Dynamics}, 45(7):1255--1268,
  2022.

\bibitem{Zavoli_JGCD_2021}
A.~Zavoli and L.~Federici.
\newblock Reinforcement learning for robust trajectory design of interplanetary
  missions.
\newblock {\em Journal of Guidance, Control and Dynamics}, 44(8):1440--1453,
  2021.

\end{thebibliography}
\end{document}